\documentclass[10pt,a4paper]{article}
\usepackage[latin1]{inputenc}
\usepackage{amsmath}
\usepackage{amsfonts}
\usepackage{amssymb}
\usepackage{amsthm}
\usepackage[english]{babel}
\usepackage{amsmath}
\usepackage{amsfonts}
\usepackage{amssymb}
\usepackage{makeidx}
\usepackage{graphicx}
\newtheorem{lemma}{Lemma}

\newtheorem{theorem}{Theorem}

\newtheorem{rem}{Remark}
\newtheorem{conj}{Conjecture}
\newtheorem{defi}{Definition}
\title{Kolakoski Sequence: Links between Recurrence, Symmetry and Limit Density}
\author{Alessandro Della Corte\footnote{Mathematics Division, School of Sciences and Technology, University of Camerino (Italy). Email:alessandro.dellacorte@unicam.it}}
\begin{document}
\setlength{\parindent}{0pt}
\maketitle
\begin{abstract}
The Kolakoski sequence $S$ is the unique element of $\left\lbrace 1,2 \right\rbrace^{\omega}$ starting with 1 and coinciding with its own run length encoding. We use the parity of the lengths of particular subclasses of initial words of $S$ as a unifying tool to address the links between the main open questions - recurrence, mirror/reversal invariance and asymptotic density of digits.  In particular we prove that recurrence implies reversal invariance, and give sufficient conditions which would imply that the density of 1s is $\frac{1}{2}$. 
 
 KEYWORDS: Kolakoski sequence; Alternating Substitution; Recurrence.  

MSC2020: 11B83, 68R15, 05A99.

\end{abstract}
\section{Introduction}
In 1939 Oldenburger considered, within the context of symbolic dynamics, a sequence having the property of coinciding with its own run length encoding  \cite{oldenburger1939exponent}. If we choose  the alphabet $\left\lbrace 1,2 \right\rbrace$ there are two such sequences, the second of which being simply the first one without the initial element. The two sequences start as

$$1221121221\ldots$$ and $$2211212212\ldots$$
In 1965 Kolakoski rediscovered the sequence \cite{kolakoski1965problem}, and it was easily established that it is not eventually periodic. Besides this, very little is known about the sequence. In particular, it is still not known whether it is recurrent, whether it has basic symmetry properties (mirror/reversal invariance) and whether the asymptotic density of 1s exists and equals $\frac{1}{2}$, a conjecture formulated by Keane \cite{keane1991ergodic}. A sharp bound ($0.5 \pm 0.00084$) for the density of 1s has been provided  \cite{chvatal1994notes}. Concerning other properties of the sequence, it has been proved that it is cube-free, which is a particular case of a more general result on repetitions \cite{carpi1994repeated}. Moreover, a measure conjectured to completely describe the densities of all subwords of the sequence has been introduced, and the conjecture has been proved under fairly natural additional hypotheses \cite{dekking1997long}. Recursive formulas for the $n$-th element of the sequence are also known \cite{steinsky2006recursive,hammam2016some}.

The sequence is relevant for applications concerning optical properties of aperiodic structures \cite{tuz2013optical,fesenko2014terahertz,sing2004kolakoski}, but probably its most interesting features are linked to the unique combination of the simplicity of its definition and the difficulty of the problems it raises. 

The sequence is nowadays indexed as A000002 in Sloane's online \textit{Encyclopedia of Integer Sequences}.

In this paper we study the open problems - namely recurrence, mirror and reversal invariance and asymptotic frequency of digits - trying to identify a unifying concept, i.e. the parity of the integrals (the converse transform of run length encoding) of subwords of the sequence. After introducing some notation and terminology in Section \ref{not}, the main open problems are reformulated in terms of parity of integrals of prefixes in Section \ref{refo}, while in Section \ref{mr} we prove some results, including that recurrence implies reversal invariance, and provide sufficient conditions implying that the asymptotic frequency of 1s is $\frac{1}{2}$. In Section \ref{ite} we describe a constructive procedure showing the existence of arbitrarily long recurrent subwords and identify places where they must occur in the structure of $S$. Finally, in Section \ref{more} we formulate some conjectures arising from the proposed approach. 

We tried to make the paper as self-contained as possible. For this reason we included the proof of some facts (covered in Lemmas \ref{lemma1}, \ref{furtherintegration}, \ref{findderivativeS}, \ref{find_derivative}, \ref{Cinfty} and \ref{lengthestimates})  already known, although generally not presented, in the cited literature, exactly in the form proposed herein. In this way we could also ``optimize" their exact formulation for our aims.

\section{Notation and preliminary definitions}
\label{not}
Let $\mathcal{A}^*$ be the set of the finite words on the alphabet $$\mathcal{A}=\left\lbrace 1,2 \right\rbrace$$ and $\mathcal{A}^{\infty}$ the set $\mathcal{A}^*\cup \mathcal{A}^{\omega}$ of all finite or infinite words on $\mathcal{A}$. We let $\epsilon$ denote the empty word and we set $\mathcal{A}^+:=\mathcal{A}^*\setminus \left\lbrace\epsilon\right\rbrace$. 

The concatenation of the finite word $w=a_1a_2\cdots a_n$ and the (possibly infinite) word $v=b_1b_2\cdots$, i.e. the word $a_1a_2\cdots a_nb_1b_2\cdots$, is written as $wv$. The sets $\mathcal{A}^*$ and $\mathcal{A}^+$ have respectively the structure of a free monoid and a free semigroup with the internal operation defined as the concatenation of words.

For every $w\in\mathcal{A}^{+}$, by $\widetilde{w}$ we mean the \textit{mirror word }of $w$, i.e. the transform of $w$ under the substitutions $$1\rightarrow 2$$ $$2\rightarrow 1$$ 
We also set $\widetilde{\epsilon}:=\epsilon$. If $w=a_1\cdots a_n$ is a word in $\mathcal{A}^+$, we set $\Sigma w:= \Sigma_{i=1}^n a_i$ and $\overleftarrow{w}:=a_na_{n-1}\cdots a_1$, calling the former the \textit{sum} of $w$ and the latter the \textit{inverse word }of $w$ (we set $\overleftarrow{\epsilon}:=\epsilon$). We indicate by $|w|$ the length of $w$, i.e. the positive integer $n$ (we set $|\epsilon|:=0$). 

We say that $v\in\mathcal{A}^+$ is a \textit{subword} of $w=a_1a_2\cdots\in \mathcal{A}^{\infty} $ if there exist a positive integer $k$ and a non-negative integer $h$ such that $v=a_k a_{k+1}\cdots a_{k+h}$. In the following it will be handy to have a term concisely referring to a particular occurrence of a subword. Therefore, we call the pair $(v,k)$  a \textit{subrow} of $w$ and say that $(v,k)$ is an \textit{occurrence} of $v$ in $w$, and that two subrows $(v_1,k)$ and $(v_2,h)$ \textit{coincide as subwords} if $v_1=v_2$. When we want to emphasize the initial and final elements of the subrow $a_ka_{k+1}\cdots a_{k+h}$, we write it as $w_{k,k+h}$. 
To lighten the notation, if there is no possibility of confusion we may use the same symbol for the subrow $(v,k)$ and the subword $v$. We let $\mathcal{SR}(S)$ denote the set of all the finite subrows of $S$.

We say that $v$ is a \textit{prefix} of a (possibly infinite) word $w=a_1a_2\cdots$ if there is a positive integer $k$ such that $v=a_1\cdots a_k$. We say that $v$ is a \textit{suffix} of a finite word $w=a_1\dots a_n$ if there is a positive integer $k< n$ such that $v=a_{n-k}a_{n-k+1}\dots a_n$.

\vspace{0.2cm}

Let us define a map from $\mathcal{A}^{\infty}$ to itself by means of alternating substitution rules. Specifically, for every nonempty $w\in \mathcal{A}^{\infty}$ we define the following substitution rules:
\begin{align}
\begin{split}
&1\rightarrow 1\qquad\quad
2\rightarrow 11\qquad\quad
\text{for the elements having odd index in}\quad w\\
&1\rightarrow 2\qquad\quad
2\rightarrow 22\qquad\quad
\text{for the elements having even index in}\quad w
\end{split}
\label{subs}
\end{align}
We let $w^{-1}$ denote the transform of $w$ under the substitutions \eqref{subs} \footnote{The reason of this choice is that the converse transformation, which we introduce later, is usually denoted derivative-like, with positive integers as exponents. Moreover it will come handy, compared to something more cumbersome like $I(\cdot)$, when we will have to write relatively long concatenations of words which are iterations of the integration map.}. We also set $\epsilon^{-1}:=\epsilon$. For every $w\in\mathcal{A}^{\infty}$, we define inductively: 
\begin{align}
\begin{split}
&w^{0}:=w\\ 
&\text{for every integer}\, k > 1, \quad w^{-k}:=(w^{-(k-1)})^{-1} 
\end{split}
\label{integral}
\end{align}
We refer to the $(\cdot)^{-1}$ map as the \textit{integration} map.

Alternating substitution rules are quite well investigated and have also been generalized \cite{culik1992iterative}. Relative results have been used specifically to study the Kolakoski sequence. It was indeed proved that, even if \eqref{subs} are very close to the simplest possible case of alternating substitution rules, its fixed point (i.e., Kolakoski sequence) cannot be obtained by iteration of a \textit{simple} substitution \cite{culik1992alternating}.  
%

The existence and uniqueness of the Kolakoski sequence are established by means of the following Lemma. 
\begin{lemma}
There exists a unique element $S$ of $\left\lbrace 1,2 \right\rbrace^{\omega}$ such that $S=S^{-1}$. Moreover, indicating the $n$-th element of $S$ by $s_n$, for every positive integer $n$ there exists a positive integer $h$ such that $s_n$ is the $n$-th element of $(12)^{-k}$ for every integer $k$ such that $k\ge h$.
\label{lemma1}
\end{lemma}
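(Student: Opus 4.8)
The plan is to construct $S$ as the limit of the nested sequence of prefixes $(12)^{-k}$, the whole argument resting on a single structural feature of the integration map: its compatibility with the prefix order. So the first thing I would isolate is the claim that \emph{integration preserves prefixes}, i.e. that if a finite word $u$ is a prefix of some $w\in\mathcal{A}^{\infty}$, then $u^{-1}$ is a prefix of $w^{-1}$. This is immediate from \eqref{subs}: the image of a letter depends only on that letter and on the parity of its index, so since $u$ and $w$ share their first $|u|$ letters together with their positions, the concatenated images of those letters coincide, and $w^{-1}$ begins precisely with that block, which is $u^{-1}$.

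Next I would build the chain. Since $(12)^{0}=12$ is a prefix of $(12)^{-1}=122$, applying the integration map $k$ times and invoking prefix-preservation shows that $(12)^{-k}$ is a prefix of $(12)^{-(k+1)}$ for every $k\ge 0$. For the lengths I would use $|w^{-1}|=|w|+m(w)$, where $m(w)$ is the number of occurrences of $2$ in $w$; as each $(12)^{-k}$ begins with $12$ it contains a $2$, so $m\bigl((12)^{-k}\bigr)\ge 1$ and the lengths strictly increase, hence tend to infinity. A strictly increasing chain of common finite prefixes of unbounded length determines a unique infinite word, which I call $S\in\mathcal{A}^{\omega}$; this already gives the second assertion, since for a given $n$ one picks $h$ with $|(12)^{-h}|\ge n$ and then for all $k\ge h$ the $n$-th letter of $(12)^{-k}$ agrees with that of $(12)^{-h}$, namely $s_n$.

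To check $S=S^{-1}$ I would apply prefix-preservation once more, now to the infinite word $S$: since $(12)^{-k}$ is a prefix of $S$, its image $(12)^{-(k+1)}$ is a prefix of $S^{-1}$, so $S$ and $S^{-1}$ share arbitrarily long prefixes and must be equal. For uniqueness, suppose $T\in\mathcal{A}^{\omega}$ satisfies $T=T^{-1}$. By \eqref{subs} the image of any odd-indexed letter begins with $1$ and the image of any even-indexed letter begins with $2$; hence the first letter of $T^{-1}$ is $1$, forcing $t_1=1$, and then the image of $t_1$ is the single letter $1$, so the second letter of $T^{-1}$ is the first letter of the image of the even-indexed $t_2$, namely $2$, forcing $t_2=2$. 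Thus $T$ begins with $12$, and the same bootstrap as above (if $(12)^{-k}$ is a prefix of $T$ then $(12)^{-(k+1)}=\bigl((12)^{-k}\bigr)^{-1}$ is a prefix of $T^{-1}=T$) gives by induction that every $(12)^{-k}$ is a prefix of $T$, whence $T=S$.

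The main obstacle is not any computation but getting the prefix-preservation statement cleanly and ensuring it transfers to the infinite word $S$ (where one must confirm the images are concatenated in a position-stable way); once that is secured, existence, the stabilization in the second assertion, and uniqueness all collapse to the elementary fact that a diverging chain of nested finite prefixes pins down a single element of $\mathcal{A}^{\omega}$.
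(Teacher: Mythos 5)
Your proof is correct and takes essentially the same route as the paper: both construct $S$ as the limit of the nested prefixes $(12)^{-k}$, relying on the fact that integration preserves prefixes (which the paper phrases via the block decomposition $u^{-k}=uv_1\cdots v_k$ with nonempty blocks), and both settle uniqueness by forcing the first two letters of any fixed point to be $12$. Your explicit letter-by-letter derivation of $t_1=1$, $t_2=2$ simply spells out what the paper compresses into the remark that $12$ is the only word of length $2$ that is a prefix of its own integral.
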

\begin{proof}
Take a finite word $u$ such that $$u^{-1}=uv_1$$ with $v_1\ne\epsilon$. Integrating both sides of the previous equality one gets $$u^{-2}=(uv_1)^{-1}=u^{-1}v_2=uv_1v_2$$ where $v_2$ equals $v_1^{-1}$ or $\widetilde{v_1^{-1}}$ according to $|u|$ being respectively even or odd. Iterating the argument it follows that, for every non-negative integer $k$,  
\begin{equation}
u^{-k}=uv_1\cdots v_k
\label{blocks}
\end{equation}
where, for $2\le h\le k$, $v_h=v_{h-1}^{-1}$ or $v_h=\widetilde{v_{h-1}^{-1}}$ according to $|uv_1\cdots v_{h-2}|$ being respectively even or odd.
Since the words $v_i$ are nonempty, an arbitrarily long prefix of $u^{-k}$ remains unaltered by further integrations. More precisely, if we write $u^{-k}$ as $$u^{-k}=a_1^ka_2^k\cdots a_{|u^{-k}|}^k$$ (where $a_i ^k\in \left\lbrace 1,2\right\rbrace$), for every positive integer $n$ there is $h$ such that $a_n^{j_1}=a_n^{j_2}$ for every $j_1,j_2\ge h$. Hence we can define the limit sequence $S$ of the right hand side of \eqref{blocks} for $k\rightarrow\infty$, which clearly verifies $S=S^{-1}$. Taking $u=12$ one gets the existence of $S$. As for the uniqueness, it follows immediately observing that the only word of length 2 which is a prefix of its integral is 12.     
\end{proof}
\begin{defi}
The sequence $S$ is called the \textit{Kolakoski sequence}. 
\end{defi}
By the arbitrariness of the prefix $u$ in the previous proof, we easily get by induction the following
\begin{lemma}
If $p$ is a prefix of $S$, such is $p^{-k}$ for every non-negative integer $k$. 
\label{furtherintegration}
\end{lemma}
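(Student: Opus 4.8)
The plan is to reduce the statement to two ingredients already at hand: the fixed-point identity $S=S^{-1}$ established in Lemma \ref{lemma1}, and the fact that the integration map preserves the prefix relation. Granting the latter, the lemma follows by a one-line induction on $k$.

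First I would isolate the key monotonicity property of integration: if $p$ is a prefix of a (possibly infinite) word $w$, then $p^{-1}$ is a prefix of $w^{-1}$. The reason is that the substitution rules \eqref{subs} act locally, the image of the $i$-th letter of a word depending only on that letter and on the parity of $i$, while $w^{-1}$ is by definition the concatenation of these images read from left to right. If $p=a_1\cdots a_n$ is a prefix of $w=a_1a_2\cdots$, then for $1\le i\le n$ both the $i$-th letter and its index parity coincide in $p$ and in $w$; hence the image assigned to $a_i$ is the same whether we integrate $p$ or $w$. Consequently $p^{-1}$ is precisely the initial segment of $w^{-1}$ generated by its first $n$ letters, so $p^{-1}$ is a prefix of $w^{-1}$.

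With this in hand I would argue by induction on $k$. For $k=0$ we have $p^{-0}=p$, which is a prefix of $S$ by hypothesis. Assuming $p^{-(k-1)}$ is a prefix of $S$, I apply the monotonicity property with $w=S$ to obtain that $(p^{-(k-1)})^{-1}=p^{-k}$ is a prefix of $S^{-1}$; since $S^{-1}=S$ by Lemma \ref{lemma1}, it follows that $p^{-k}$ is a prefix of $S$, closing the induction.

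The only real content lies in the monotonicity step, and the one thing to get right there is the bookkeeping of index parities: one must check that passing to a prefix shifts no position, so that the alternating rules \eqref{subs} assign identical local images in $p$ and in $w$. Everything else is immediate from the definition of the integration map and the fixed-point identity, so I do not expect a substantial obstacle beyond this verification.
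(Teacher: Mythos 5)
Your proof is correct and follows essentially the same route as the paper: the paper's one-line proof ("by the arbitrariness of the prefix $u$ in the previous proof, we easily get by induction") rests on exactly the two ingredients you isolate, namely that the alternating substitutions \eqref{subs} act locally on letters (so integration preserves the prefix relation, which is what underlies the block identity $u^{-k}=uv_1\cdots v_k$ in Lemma \ref{lemma1}) and the fixed-point identity $S=S^{-1}$, closed off by induction on $k$. You have merely made explicit the parity bookkeeping that the paper leaves implicit, which is a faithful expansion rather than a different argument.
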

We now want to adapt the previous definition of integral so that it applies nicely to \textit{subrows} of $S$, meaning that we can identify which subrow of $S$ can be naturally seen as the integral of a given subrow. We thus want a version of the integration map which maps $\mathcal{SR}(S)$  to itself (while $(\cdot)^{-1}$ maps $\mathcal{A}^{\infty}$ to itself).  Therefore we introduce the following
\begin{defi}
Let $w$ be a subrow of $S$ and $u$ the prefix such that $S=uw\dots$. We define the $S$-integral of the subrow $w$ as the subrow $w^{-1}_{S}:=S_{h,k}$ where $$h=|u^{-1}|+1\quad\text{and} \quad k=|u^{-1}|+|w^{-1}|$$
We also define inductively $w^{-n}_S:=\left(w^{-n+1}_S\right)^{-1}_S$.
\label{subsint}
\end{defi}
\begin{rem} Since $(\cdot)^{-1}$ is a non-morphic map, the $S$-integral of a subrow $w$ does not coincide always with its integral as a subword, defined by means of the substitution rules \eqref{subs}. Indeed, if $uw$ is a prefix of $S$, considering $w^{-1}_S$ as a subword, we have
$$w^{-1}_{S}=w^{-1}$$ if $|u|$ is even and $$w^{-1}_{S}=\widetilde{w^{-1}}$$ if $|u|$ is odd. Notice also that in general, for a subrow $w$ which is not a prefix and for $k$ large, $w^{-k}$ and $w_S^{-k}$ are different words which are not linked in any trivial way.
\label{remsint}
\end{rem}

Next we want to define the property of a subrow of having $S$-integrals of even length up to a certain order, starting from the order 0 (that is, from the length of the subrow itself).

More precisely, we introduce the following
\begin{defi}
We say that the subrow $w$ 
is $k$\textit{-regular} if 
$|w^{-h}_S|$ is even for $0\le h\le k$. We say that a subrow is $k$\textit{-normal} if it is $k$-regular but not $(k+1)$-regular. We say that a subrow is $\infty$\textit{-regular} if it is $k$-regular for every non-negative integer $k$.
\end{defi}
Notice that in case $w$ is a prefix, $k$-regularity reduces to requiring that $|w^{-h}|$ is even for $0\le h\le k$.

We indicate by $k$-R the subset of $\mathcal{SR}(S)$ consisting of all the $k$-regular subrows of $S$, by $k$-N the subset of $\mathcal{SR}(S)$ consisting of all the $k$-normal subrows of $S$ and by $\infty$-R the subset of $\mathcal{SR}(S)$ consisting of all the $\infty$-regular subrows of $S$.
It is easily proved the following
\begin{lemma}
\begin{enumerate}
\item $k\ge h \implies k$-\emph{R} $\subseteq h$-\emph{R}.
\item For every non-negative integer $k$, $k$-\emph{-N}$\subset k$-\emph{-R}.
\item $k\ne h \implies k$-\emph{N} $\cap\,\, h$-\emph{N}$=\emptyset$.
\item For every non-negative integer $k$, $w\in k$-\emph{N}$ \implies w \notin \infty$-\emph{R}.
\item For positive integers $a<b<c$, $S_{a,b}, S_{(b+1),c} \in k$-\emph{R} $\implies S_{a,c}\in k$-\emph{R}.
\item For positive integers $a<b<c$, $S_{a,b}, S_{(b+1),c} \in k$-\emph{N} $\implies S_{a,c}\in (k+1)$-\emph{R}.
\end{enumerate} 
\label{Syntax}
\end{lemma}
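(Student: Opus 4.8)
The engine for the whole lemma is a single length identity. Reading off the substitution rules \eqref{subs}, the image of a letter $1$ has length $1$ and the image of a letter $2$ has length $2$, \emph{irrespective} of the parity of its index; hence for every $w\in\mathcal{A}^+$ one has $|w^{-1}|=\Sigma w$, and in particular $|w^{-1}|$ is additive over concatenation, $|(vw)^{-1}|=|v^{-1}|+|w^{-1}|$. Since Definition \ref{subsint} forces $|w^{-1}_S|=|w^{-1}|$ (the image subrow $S_{h,k}$ has $k-h+1=|w^{-1}|$), the same additivity holds for $S$-integral lengths at order one. First I would record this, and then prove the real workhorse from which items $5$ and $6$ fall out.

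Key claim: if $S_{a,b}$ and $S_{b+1,c}$ are adjacent subrows, then $(S_{a,b})^{-1}_S$ and $(S_{b+1,c})^{-1}_S$ are again adjacent and their concatenation is exactly $(S_{a,c})^{-1}_S$. To see this, let $u=S_{1,a-1}$ be the prefix preceding position $a$ and $u'=S_{1,b}=u\,S_{a,b}$ the one preceding position $b+1$. By Definition \ref{subsint}, both $(S_{a,b})^{-1}_S$ and $(S_{a,c})^{-1}_S$ start at index $|u^{-1}|+1$, while $(S_{a,b})^{-1}_S$ ends at $|u^{-1}|+|S_{a,b}^{-1}|$. By the additivity above, $|u'^{-1}|=|u^{-1}|+|S_{a,b}^{-1}|$, so $(S_{b+1,c})^{-1}_S$ starts at $|u'^{-1}|+1$, i.e. immediately after $(S_{a,b})^{-1}_S$ ends; and it ends at $|u'^{-1}|+|S_{b+1,c}^{-1}|=|u^{-1}|+|S_{a,c}^{-1}|$, which is precisely where $(S_{a,c})^{-1}_S$ ends. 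Thus the two images abut without gap or overlap and tile $(S_{a,c})^{-1}_S$. A straightforward induction on $h$ — the inductive step being just this claim applied to the adjacent pair $(S_{a,b})^{-h}_S,(S_{b+1,c})^{-h}_S$ — then yields, for every $h\ge 0$,
\begin{equation*}
|(S_{a,c})^{-h}_S|=|(S_{a,b})^{-h}_S|+|(S_{b+1,c})^{-h}_S|.
\end{equation*}

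With this in hand, items $5$ and $6$ are pure parity arithmetic. For item $5$, if both factors are $k$-regular then both summands on the right are even for $0\le h\le k$, so the left side is even for those $h$, i.e. $S_{a,c}\in k\text{-R}$. For item $6$, if both factors are $k$-normal they are in particular $k$-regular (giving $S_{a,c}\in k\text{-R}$ by item $5$), while at order $k+1$ each summand is \emph{odd} — failure of $(k+1)$-regularity for a $k$-regular subrow can only occur at order $k+1$ — and since odd $+$ odd $=$ even, the order-$(k+1)$ length of $S_{a,c}$ is even, whence $S_{a,c}\in(k+1)\text{-R}$.

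The remaining items are immediate from the definitions: item $1$ because regularity at orders $0,\dots,k$ contains regularity at orders $0,\dots,h$; item $3$ because a subrow in $k\text{-N}\cap h\text{-N}$ with, say, $h<k$ would have its order-$(h+1)$ length simultaneously odd (from $h$-normality) and even (from $k$-regularity, as $h+1\le k$); item $4$ because $k$-normality already forces an odd length at order $k+1$, contradicting $\infty$-regularity. The inclusion $k\text{-N}\subseteq k\text{-R}$ in item $2$ is definitional, and its strictness follows from item $6$ applied to a pair of adjacent $k$-normal subrows — witnessed for $k=0$ by $S_{1,2}=12$ and $S_{3,4}=21$, whose concatenation $S_{1,4}$ is $1$-regular and hence lies in $0\text{-R}\setminus 0\text{-N}$. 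I expect the only genuinely delicate point to be the Key claim: the map $(\cdot)^{-1}_S$ is non-morphic and position-dependent (its parity twist depends on the preceding prefix), so a priori the $S$-integral of a concatenation need not split as a concatenation of the factors' $S$-integrals. The length bookkeeping above is exactly what forces the two images to abut; once that is established, the induction to all orders — and with it items $5$ and $6$ — goes through routinely.
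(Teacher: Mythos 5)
Your proof is correct, and it is worth noting that the paper itself offers no argument for this lemma at all --- it is introduced only with ``It is easily proved the following'' --- so there is no authorial proof to compare against; what you supply is precisely the justification the paper leaves implicit. Your Key Claim is the right missing ingredient: since each letter $a\in\{1,2\}$ is sent by the rules \eqref{subs} to a word of length $a$ regardless of the parity of its position, one has $|w^{-1}|=\Sigma w$, lengths of integrals are additive over concatenation, and hence the $S$-integrals of adjacent subrows abut and tile the $S$-integral of their concatenation. This tiling fact is exactly what the paper silently relies on, here and again later (e.g.\ when it integrates the decomposition $S=w_1w_2\dots$ term by term in Lemma \ref{uniformly}, in Theorem \ref{cover}, and in the block identity \eqref{blocksdef2}), so isolating and proving it is the genuinely valuable part of your write-up. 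The induction to all orders, the parity arithmetic for items 5 and 6 (including the observation that a $k$-regular subrow can only fail $(k+1)$-regularity at order exactly $k+1$), and the arguments for items 1, 3 and 4 are all correct.

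The one soft spot is the strictness of the inclusion in item 2. Your witness ($S_{1,2}=12$ and $S_{3,4}=21$ are $0$-normal, so $S_{1,4}\in 1$-R lies in $0$-R but not $0$-N) settles only $k=0$; for general $k$, invoking ``item 6 applied to a pair of adjacent $k$-normal subrows'' presupposes that such a pair exists, which you do not establish. In fact the existence of $(k+1)$-regular subrows for every $k$ is essentially the content of Theorem \ref{cover} later in the paper (its proof is an induction that uses items 5 and 6 of this very lemma, so the deferral is not circular but is also not ``immediate from the definitions''), and for \emph{prefixes} the analogous existence statement is equivalent, via Lemma \ref{recur}, to the open problem of recurrence. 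So either read the symbol $\subset$ in item 2 as plain inclusion --- which, as you say, is definitional --- or explicitly defer strictness for arbitrary $k$ to the construction of Theorem \ref{cover}.
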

Next we want to introduce the converse operation of integration, i.e. the so called \textit{derivative} for words in $\mathcal{A}^*$, which, roughly speaking, coincides with a run-length counting operation. However, we should take care to avoid the ambiguity arising when a subword starts or ends with a single digit not belonging to a pair of equal elements of the alphabet, as in that case we cannot know the length of its run without looking outside the subword. For this reason it is usual \cite{dekking1997long} to cut off those single digits, when they are present. 

More precisely, for every $w=a_1\cdots a_n\in\mathcal{A}^+$ we define $w'$ as the unique finite word such that $(w')^{-1}$ equals
\begin{align*}
& a_1\cdots a_n\quad \text{if}\quad a_1= a_2\quad \text{and} \quad a_{n-1}= a_n\\
& a_2\cdots a_n \quad \text{if}\quad a_1\ne a_2\quad \text{and}\quad a_{n-1}= a_n\\
& a_1\cdots a_{n-1} \quad \text{if}\quad a_1= a_2\quad \text{and}\quad a_{n-1}\ne a_n\\
& a_2\cdots a_{n-1} \quad \text{if}\quad a_1= a_2\quad \text{and}\quad a_{n-1}= a_n
\end{align*}  
We also set $1'=2'=\epsilon':=\epsilon$, so that derivatives of every order exist for all elements of $\mathcal{A}^*$; notice that this also implies $(12)'=(21)'=\epsilon$. We also define inductively $w^{(n)}:=(w^{(n-1)})'$. Finally, we define the derivative of an infinite word $v=a_1a_2\cdots \in\mathcal{A}^{\omega}$, not ultimately constant, as the word whose $m$-th element is the $m$-th element of $(a_1\cdots a_{n})'$ for all sufficiently large $n$. It is easily seen that this $m$-th element stays the same when $n$ diverges unless the sequence is ultimately constant (in which case the derivative of the sequence is not defined). 

Adopting the usual convention, we indicate by $C^k$ the set of words which belong to $\mathcal{A}^{\infty}$ together with their first $k$ derivatives, and by $C^{\infty}$ the set $\bigcap_{k\in\mathbb{N}} C^k$. 

Similarly to what done before for integrals, we want now to adapt the definition of derivative so that it works for subrows. Therefore we introduce the following

\begin{defi}
Let $w$ be a subrow of $S$. If there exists a subrow $v$ such that $v_S^{-1}=w$, we set $w_S':=v$. We call $v$ the $S$-derivative of $w$. 

We also define inductively $w_S^{(n)}:=(w_S^{(n-1)})_S'$, of course if $w_s^{(k)}$ admits an $S$-derivative for every $k\le n$.
\end{defi}

\begin{rem}
Notice that not every subrow has an $S$-derivative. For instance there is no subrow $u$ such that $s_{3}s_4=u_S^{-1}$. 
\label{Sderiv}
\end{rem}
The following Lemma characterizes the subrows admitting an $S$-derivative.

\begin{lemma}
Let $n$ and $m$ be two positive integers such that $n<m$. If $w=s_n\dots s_{m}$ is a subrow of $S$, it admits an $S$-derivative if and only if  $s_{n-1}\ne s_n$ and $s_{m}\ne s_{m+1}$. Moreover, if $w_S'=s_h\dots s_{h+k}$ then $h\le n$ and $h+k\le m$.
\label{findderivativeS}
\end{lemma}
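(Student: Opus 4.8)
The plan is to reduce everything to the run decomposition of $S$ induced by the identity $S=S^{-1}$ from Lemma \ref{lemma1}. Writing $T_j:=\Sigma(s_1\cdots s_j)$ for $j\ge 1$ and $T_0:=0$, the equality $S=S^{-1}$ says that $S$ is partitioned into consecutive runs $R_1,R_2,\dots$, where $R_j$ is the image of $s_j$ under \eqref{subs}: it has length $s_j$, consists of $1$s if $j$ is odd and of $2$s if $j$ is even, and occupies exactly the positions from $T_{j-1}+1$ to $T_j$. The first observation I would record is the elementary dictionary between run boundaries and parities: for a positive integer $N$ one has $N=T_j$ for some $j$ if and only if $s_N\ne s_{N+1}$, because $N=T_j$ means that position $N$ closes run $R_j$ while position $N+1$ opens the oppositely-labelled run $R_{j+1}$, and conversely a change of symbol can only occur at the end of a run.

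Next I would translate the $S$-integral into the language of the $T_j$. If $v$ is a subrow of $S$ and $p$ is the prefix with $S=pv\cdots$, then $|p^{-1}|=\Sigma p=T_{|p|}$ and $|(pv)^{-1}|=\Sigma(pv)=T_{|pv|}$, simply because integrating a word produces, for each of its letters, a run of the corresponding length, so the integral has length equal to the sum of the letters. By Definition \ref{subsint}, $v_S^{-1}$ is therefore the subrow of $S$ occupying positions $T_{|p|}+1$ through $T_{|pv|}$. In words: the $S$-integral of any subrow starts immediately after a run boundary and ends exactly at a run boundary. This is the structural fact that drives the whole statement.

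With these two facts in hand I would conclude as follows. Suppose $w=s_n\cdots s_m$ has an $S$-derivative $v=s_h\cdots s_{h+k}$, and let $p=s_1\cdots s_{h-1}$ be the prefix preceding $v$. Then $v_S^{-1}=w$ forces $n=T_{h-1}+1$ and $m=T_{h+k}$, so both $n-1=T_{h-1}$ and $m=T_{h+k}$ are run boundaries, whence $s_{n-1}\ne s_n$ and $s_m\ne s_{m+1}$ by the dictionary above. Conversely, if $s_{n-1}\ne s_n$ and $s_m\ne s_{m+1}$, the dictionary gives indices with $n-1=T_{h-1}$ and $m=T_{h+k}$ (and $h\le h+k$ since $T$ is strictly increasing and $n-1<m$); setting $p=s_1\cdots s_{h-1}$ and $v=s_h\cdots s_{h+k}$, Definition \ref{subsint} yields $v_S^{-1}=S_{n,m}=w$, so $v$ is the required $S$-derivative. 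Finally, the ``moreover'' is a one-line consequence of $s_i\ge 1$: from $T_{h-1}=n-1$ and $T_{h+k}=m$ together with $T_j\ge j$ we get $h-1\le n-1$ and $h+k\le m$, i.e. $h\le n$ and $h+k\le m$.

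The only genuinely delicate point, and the step I would treat most carefully, is the bookkeeping in the dictionary $N=T_j \Leftrightarrow s_N\ne s_{N+1}$ and its interaction with the endpoint $n=1$, where $n-1=T_0=0$ is a (vacuous) boundary and $s_{n-1}$ is not defined; there the left-hand condition is to be read as automatically satisfied, which is why the statement tacitly takes $n\ge 2$. Everything else is routine once the run decomposition is set up, and in particular I would state the identity $|p^{-1}|=\Sigma p$ explicitly, since it is exactly what converts positions in $S$ into the partial sums $T_j$.
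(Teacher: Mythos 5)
Your proof is correct and rests on the same foundation as the paper's: the run decomposition of $S$ induced by $S=S^{-1}$, in which the image of $s_j$ is a run of length $s_j$ made of 1s or 2s according to the parity of $j$, so that symbol changes occur exactly at run boundaries. The paper phrases the forward direction as a local parity argument (the images of the consecutive letters $s_j$ and $s_{j+1}$ carry opposite symbols) and treats the converse and the ``moreover'' part tersely via $|u^{-1}|\ge |u|$, while your partial-sum bookkeeping $T_j=\Sigma(s_1\cdots s_j)$ is just a more explicit rendering of the same argument, including the same handling of the prefix edge case $n=1$.
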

\begin{proof}
Suppose that $w$ admits an $S$-derivative. Then $w$ is the transform under substitutions \eqref{subs} (or the mirror of the transform under \eqref{subs}) of another subrow $v=s_h \dots s_j$ ($j\ge h$). This means in particular that $(s_j)_S^{-1}=s_m$ or $(s_j)_S^{-1}=s_{m-1}s_{m}$, so that $(s_{j+1})_S^{-1}=s_{m+1}$ or $(s_{j+1})_S^{-1}=s_{m+1}s_{m+2}$. Since $j$ and $j+1$ cannot be both even or both odd, it follows that $s_m \ne s_{m+1}$. The proof proceeds analogously for $s_n$ if $s_{n-1}$ exists, otherwise (i.e. if $w$ is a prefix), the thesis is vacuously true.

Conversely, suppose that $s_{n-1}\ne s_n$ and $s_{m}\ne s_{m+1}$. Then, by definition of $S$, $w$ is the transform under \eqref{subs} (or the mirror of the transform under \eqref{subs}) of some subrow $v$.

Finally, if $w_S'=s_h\dots s_k$, the inequalities  $h\le n$ and $h+k\le m$ follow from the fact that, for every word $w$, $|w^{-1}|\ge |w|$. 
\end{proof}
 Since $w'$ is always a subword of $w_S'$, the following Lemma follows from the previous one:
\begin{lemma}
If $w=s_ns_{n+1}\cdots s_{n+m}$ is a subrow of $S$ with nonempty derivative, there exists $h$ and $k$ such that $w'=s_{h}s_{h+1}\cdots s_{h+k}$.
\label{find_derivative}
\end{lemma}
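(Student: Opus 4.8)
The plan is to reduce the ordinary derivative $w'$ to an $S$-derivative, for which Lemma \ref{findderivativeS} already guarantees that the result is a subrow of $S$. The only obstruction is that $w$ itself need not admit an $S$-derivative: if $w=s_ns_{n+1}\cdots s_{n+m}$ begins or ends in the middle of a run of $S$ (that is, $s_{n-1}=s_n$ or $s_{n+m}=s_{n+m+1}$), then by Lemma \ref{findderivativeS} the symbol $w_S'$ is undefined, even though $w'$ may well be nonempty. So first I would pass from $w$ to its trimmed part $u:=(w')^{-1}$, the contiguous subword of $w$ obtained by deleting the (at most one) incomplete boundary run at each end according to the four cases in the definition of the derivative; the degenerate case $|w'|=1$ is trivial, since both letters occur in $S$. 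Being a contiguous piece of $w$, the word $u$ is itself a subrow of $S$, say $u=s_a\cdots s_b$ with $a<b$.

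The first key step is to check that $u$ admits an $S$-derivative, i.e. that $s_{a-1}\ne s_a$ and $s_b\ne s_{b+1}$, the boundary conditions of Lemma \ref{findderivativeS}. This is where the trimming is used. If a boundary digit of $w$ was deleted, say at the left, it was deleted precisely because $s_n\ne s_{n+1}$, so $u$ begins at $n+1$ with $s_n\ne s_{n+1}$ and we sit at a run boundary. If instead no digit was deleted at the left, then $s_n=s_{n+1}$, and since every run of $S$ has length at most $2$ (the run lengths of $S$ are its own values, hence lie in $\{1,2\}$) this forces $s_{n-1}\ne s_n$, again a run boundary. The symmetric argument handles the right end, and when $w$ is a prefix the left condition is vacuously true. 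Hence by Lemma \ref{findderivativeS} the subrow $u_S'$ is defined and equals some $s_h\cdots s_{h+k}$ in $S$.

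It then remains to identify $w'$ with $u_S'$. The guiding observation is that both the ordinary derivative and the $S$-derivative merely record the sequence of run lengths of the trimmed word. By construction $u$ begins and ends with complete runs, so $w'$ is exactly the word of run lengths of $u$; on the other hand, $u_S'$ is by definition the subrow $v$ with $v_S^{-1}=u$, and applying the substitutions \eqref{subs} turns each symbol of $v$ into a full run of $u$ whose length equals that symbol's value. Since run lengths are invariant under the mirror map, the possible discrepancy recorded in Remark \ref{remsint} between $v^{-1}$ and $\widetilde{v^{-1}}$ does not affect the lengths, and one obtains $w'=v=u_S'=s_h\cdots s_{h+k}$ as words in $\mathcal{A}^+$. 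This proves the claim and makes precise the slogan that $w'$ is a subword of $w_S'$.

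The step I expect to be the main obstacle is this last one: matching $w'$ with $u_S'$ as literal words rather than only up to the mirror, because the ordinary integration map and the $S$-integral differ by a mirror according to the parity of the prefix preceding $u$. The resolution is the mirror-invariance of run lengths noted above, so that no appeal to the (open) mirror invariance of $S$ is required; but it is exactly the point one must handle with care, together with the use of the bound $2$ on the run lengths of $S$ in the first step.
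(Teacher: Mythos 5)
Your proof is correct and takes essentially the same route as the paper, which disposes of this lemma in one line by asserting that $w'$ is always a subword of $w_S'$ and invoking Lemma \ref{findderivativeS}. Your version supplies exactly the details that one-liner glosses over (indeed $w_S'$ need not exist when $w$ begins or ends mid-run): trimming to $u=(w')^{-1}$, using the bound $2$ on run lengths to verify the boundary conditions of Lemma \ref{findderivativeS}, and checking that the mirror ambiguity of Remark \ref{remsint} is harmless because run lengths are mirror-invariant.
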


 The previous Lemma immediately implies that
\begin{lemma}
If $w$ is a subword of $S$, then $w\in C^{\infty}$.
\label{Cinfty}
\end{lemma}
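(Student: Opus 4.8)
The plan is to prove the statement by induction on the order of differentiation, establishing the slightly stronger invariant that every iterated derivative of a subword of $S$ is again a subword of $S$ (or the empty word). Since membership in $C^{\infty}$ depends only on the underlying word and not on where it occurs, I would first fix an occurrence of $w$ in $S$, i.e. regard $w$ as an element of $\mathcal{SR}(S)$, so that the subrow-based Lemma \ref{find_derivative} becomes applicable to $w$ and to each of its successive derivatives.

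The base of the induction is immediate: the empty word and the single letters $1$ and $2$ have all derivatives equal to $\epsilon$, hence they lie in $C^{\infty}$. For the inductive step, suppose $w^{(k)}$ is a subword of $S$. First I would check that its derivative exists at all: because $S$ coincides with its own run-length encoding and takes values in $\{1,2\}$, every maximal run of $S$ — and therefore every maximal run of any subword of $S$, and of its trimmed version — has length $1$ or $2$. Consequently the run-length counting underlying the derivative never calls for a symbol outside $\mathcal{A}$, so $\bigl(w^{(k)}\bigr)' = w^{(k+1)}$ is a well-defined element of $\mathcal{A}^{*}$. Then I would invoke Lemma \ref{find_derivative}: if $w^{(k+1)}$ is nonempty it has the form $s_h s_{h+1}\cdots s_{h+j}$, i.e. it is again a subword of $S$; and if it is empty we are finished, since all higher derivatives are $\epsilon$. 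This closes the induction and yields that $w^{(k)}$ exists for every $k$, which is exactly $w\in C^{\infty}$.

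The only point requiring genuine care — and the step I expect to carry the real content — is the guarantee that the derivation process never gets stuck. A priori some derivative could develop a run of length $\ge 3$ (this would happen if three consecutive runs of $w^{(k)}$ had equal length), in which case the subsequent run-length count would require the symbol $3\notin\mathcal{A}$ and $w^{(k+2)}$ would fail to exist. This is precisely what Lemma \ref{find_derivative} rules out: by keeping every derivative inside $\mathcal{SR}(S)$, it forces all run lengths to stay bounded by $2$ at every order, so the derivation can be iterated indefinitely. Everything else (the base case and the bookkeeping of which index interval is hit) is routine, so once the closure of $\mathcal{SR}(S)$ under differentiation is in hand the conclusion $w\in C^{\infty}$ follows at once.
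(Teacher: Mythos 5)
Your proof is correct and follows essentially the same route as the paper, which derives Lemma \ref{Cinfty} directly from Lemma \ref{find_derivative} by exactly the induction you describe: every iterated derivative of a subword of $S$ is again a subword of $S$ (or $\epsilon$), hence stays in $\mathcal{A}^*$. Your explicit justification that the derivative exists at each stage (runs in $S$ have length at most $2$, so run-length counting never leaves the alphabet) is a detail the paper leaves implicit, but it is the same argument.
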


Finally we define formally what is meant by asymptotic frequency of digits. 
Indicating by $|v|_x$ the number of occurrences of the digit $x$ ($x\in\left\lbrace 1,2 \right\rbrace$) in $v\in \mathcal{A^+}$, and by $f_v(x)$ the frequency of $x$ in $v$, i.e. the number $\frac{|v|_x}{|v|}$, we set $$f_{\infty}(x):=\lim_{n\rightarrow \infty} \frac{|S_{1,n}|_{x}}{n}$$ whether the limit exists. The most famous conjecture concerning Kolakoski sequence is Keane's conjecture  \cite{keane1991ergodic}: $$f_{\infty}(1)\quad \text{exists and equals}\quad \frac{1}{2}$$

\section{Reformulation of the problems}
\label{refo}

In this section we reformulate some open questions concerning $S$ in terms of regularity/normality of subrows. Let us recall that, according to \eqref{integral}, elements with even (odd) index in $S$ are mapped by the integration in 2 or 22 (1 or 11). We use systematically this fact (usually without mentioning it explicitly) throughout. 

 In the following we will need a (rough) estimate of the relative length of $w$, $w'$ and $w^{-1}$ when $w$ is a subword of $S$, ensuring in particular that, for every finite word $w$ with more than one element, 
\begin{equation}
|w^{(k)}|\rightarrow 0\quad \text{ if $k$ diverges}
\label{derivativelength}
\end{equation}
and
\begin{equation}
|w^{-k}|=|w_S^{-k}|\rightarrow \infty\quad \text{ if $k$ diverges}
\label{integrallength}
\end{equation}
and that for every positive integer $k$,
\begin{equation}
|w^{(k)}|\rightarrow \infty\quad \text{if $|w|$ diverges}
\label{derivativediverges}
\end{equation}
This is obtained observing that $|w^{-1}|=\sum w$, and that the maximum and minimum density of 2s in a $C^{\infty}$ word are achieved respectively by $11211$ and $22122$. From this (recalling that the derivative cuts off single digits at both ends) the following Lemma is easily proved. 
\begin{lemma}
If $w$ is a subrow of $S$ and $|w|\ge 3$, then $$\frac{6}{5}|w| \le |w^{-1}|=|w_S^{-1}|\le \frac{9}{5}|w|,$$ $$\frac{5}{9}|w| \le |w_S'|\le \frac{5}{6}|w|\quad(\text{if} \quad w \quad \text{admits}\,\, \text{an}\,\, S\text{-derivative})$$ and $$\frac{1}{4}|w|\le |w'|\le \frac{5}{6}|w|$$  
\label{lengthestimates}
\end{lemma}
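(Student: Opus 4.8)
The backbone of all three estimates is the identity $|w^{-1}|=\Sigma w$ noted just above, combined with a bound on the frequency of $2$'s in a subword of $S$. The plan is first to prove that, for every subword $w$ of $S$ with $|w|\ge 3$, the density $\rho:=|w|_2/|w|$ satisfies $\tfrac15\le\rho\le\tfrac45$, and then to obtain each inequality by inverting the relevant length relation between a word and its integral or derivative.

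For the density bound I would argue directly from cube-freeness: by Lemma \ref{Cinfty} a subword of $S$ lies in $C^{\infty}$, so it contains no run of length $\ge 3$. Write $w$ as a concatenation of maximal runs, $p$ of which consist of $1$'s and $q$ of $2$'s. Runs of the two kinds alternate, so $|p-q|\le 1$; and since $|w|\ge 3$ and no run has length $\ge 3$, the word $w$ cannot be a single run, whence $p\ge 1$ and $q\ge 1$. Each run has length $1$ or $2$, giving $|w|_1\le 2p$ and $|w|_2\ge q$; together with $p\le q+1$ and $q\ge 1$ this yields $|w|_1\le 2p\le 2(q+1)\le 4q\le 4|w|_2$, that is $\rho\ge\tfrac15$. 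Exchanging the roles of $1$ and $2$ (equivalently, replacing $w$ by $\widetilde{w}$) gives $\rho\le\tfrac45$. The two configurations saturating these bounds --- all $1$-runs of length $2$ and all $2$-runs of length $1$ with one surplus $1$-run, and its mirror image --- are precisely $11211$ and $22122$, which fixes the constants $\tfrac65$ and $\tfrac95$.

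The first displayed inequality is then immediate, since $|w^{-1}|=\Sigma w=|w|(1+\rho)$ with $1+\rho\in[\tfrac65,\tfrac95]$, while $|w^{-1}|=|w_S^{-1}|$ is \eqref{integrallength}. For the $S$-derivative set $v:=w_S'$, so that $v_S^{-1}=w$ and hence, by \eqref{integrallength}, $|w|=|v_S^{-1}|=\Sigma v=|v|(1+\rho(v))$; solving for $|v|$ and using $1+\rho(v)\in[\tfrac65,\tfrac95]$ gives $\tfrac59|w|\le|w_S'|\le\tfrac56|w|$. For the ordinary derivative the upper bound $|w'|\le|w_S'|\le\tfrac56|w|$ is inherited from the previous one, because $w'$ is a subword of $w_S'$ (the remark preceding Lemma \ref{find_derivative}); for the lower bound I would use that $(w')^{-1}$ is the word $\widehat w$ obtained from $w$ by deleting its at most two boundary singletons, so that $|w'|(1+\rho(w'))=|\widehat w|\ge|w|-2$ and therefore $|w'|\ge\tfrac59(|w|-2)$.

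The delicate point --- and the reason the stated lower constant for $|w'|$ is $\tfrac14$ rather than $\tfrac59$ --- is that the density bound applies only to words of length at least $3$, so whenever $w'$ (or $w_S'$) is shorter the estimate must be checked directly on the finitely many short words. The extremal short case is $w=2112$ (and its mirror $1221$), where both boundary singletons are deleted, $\widehat w$ has length $2$, $|w'|=1$, and the ratio $|w'|/|w|$ attains exactly $\tfrac14$. Thus the main obstacle is not the bulk regime, which the run-counting disposes of cleanly, but the bookkeeping of these boundary and small-length exceptions, where the loss of up to two end symbols and the degenerate densities of very short derivatives force the slightly weaker constant.
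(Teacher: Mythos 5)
Your overall strategy is the same as the paper's: its (very terse) proof consists exactly of the observation $|w^{-1}|=\Sigma w$, the density bounds on 2s witnessed by $11211$ and $22122$, and the fact that differentiation trims at most one digit per end; your run-counting derivation of $\frac{1}{5}\le \rho\le\frac{4}{5}$ is a correct and genuinely useful fleshing-out of the density claim, which the paper only asserts. Two smaller criticisms before the main one. First, the equality $|w^{-1}|=|w_S^{-1}|$ should be attributed to Remark \ref{remsint} ($w_S^{-1}$ equals $w^{-1}$ or $\widetilde{w^{-1}}$, which have the same length), not to \eqref{integrallength}: the paper derives \eqref{integrallength} \emph{from} this lemma, so your citation is circular. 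Second, you obtain $|w'|\le\frac{5}{6}|w|$ from $|w'|\le|w_S'|$, which presupposes that $w$ admits an $S$-derivative, whereas the third display is asserted for every subrow with $|w|\ge 3$ (and by Remark \ref{Sderiv} not every subrow has an $S$-derivative). This is easily repaired by running your density argument directly on $w'$: writing $\widehat{w}$ for the trimmed word, $\Sigma w'=|\widehat{w}|\le|w|$, and either $|w'|\ge 3$, so the density bound applies, or $|w'|\le 2\le\frac{5}{6}|w|$.

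The genuine gap is in the middle inequality. You correctly flag that the density bound cannot be applied to $w_S'$ when $|w_S'|\le 2$ and defer to a check of ``the finitely many short words'', but you never perform it --- and for $w_S'$ that check \emph{fails}. Take $w=S_{2,5}=2211$: by Lemma \ref{findderivativeS} it admits an $S$-derivative (indeed $s_1\ne s_2$ and $s_5\ne s_6$), namely the subrow $(s_2s_3,2)=22$, since $(s_2s_3)_S^{-1}=S_{2,5}$ by Definition \ref{subsint}. Here $|w|=4$ while $|w_S'|=2<\frac{5}{9}\cdot 4=\frac{20}{9}$, so the stated bound $\frac{5}{9}|w|\le|w_S'|$ is violated, and no completion of your argument can prove it as written. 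What your argument does prove is the bound whenever $|w_S'|\ge 3$, hence in particular whenever $|w|\ge 5$ (if $|w_S'|\le 2$ then $|w|=\Sigma w_S'\le 4$); the only failing configuration is $w_S'=22$, $|w|=4$. This edge case is glossed over by the paper's one-line proof as well, and it is harmless for the only uses made of the lemma --- the asymptotic statements \eqref{derivativelength}, \eqref{integrallength}, \eqref{derivativediverges} --- but a complete proof must either restrict the middle display to $|w|\ge 5$ or record the counterexample. By contrast, your treatment of the ordinary derivative's lower bound is sound: the extremal case $2112$ (which does occur, at $S_{3,6}$) attains $\frac{1}{4}$ exactly, and when $|w'|\le 2$ one has $|w|\le\Sigma w'+2\le 6$, so the residual finite verification goes through.
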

 The asymptotic behaviors \eqref{derivativelength},  \eqref{integrallength} and \eqref{derivativediverges} immediately follow from Lemma \ref{lengthestimates} (notice that, if $|w|=2$, $|(w_S)^{-2}|\ge 3$). 

We add some definitions: a sequence $w\in \mathcal{A}^{\omega}$ is called \textit{recurrent} if every finite subword of it is repeated (and therefore every finite subword is repeated infinitely many times). It is called \textit{uniformly recurrent} if it is recurrent and the gaps between consecutive occurrences of every given finite subword are bounded. Moreover, $w$ is called \textit{mirror invariant} (\textit{reversal invariant}) if the set of its finite subwords is closed under the mirror operation: $v\rightarrow \widetilde{v}$ (inverse operation: $v\rightarrow \overleftarrow{v}$). 

It is a well known result that, for $S$, mirror invariance implies recurrence \cite{dekking1997long}. The converse implication is not trivial, and sufficient conditions for it to hold are provided in Theorem \ref{thm1}. For this, though, we need some preliminary results.

The links between recurrence, mirror invariance and regularity/normality of subrows  are established in the following Lemmas.

\begin{lemma}
$S$ is recurrent if and only if for every positive integer $k$ there is a $k$-regular prefix of $S$.
\label{recur}
\end{lemma}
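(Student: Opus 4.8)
The plan is to translate recurrence, which is a statement about arbitrary factors, into a statement about prefixes, and then to push it through the integration map one level at a time. First I would use the standard reduction that $S$ is recurrent if and only if every prefix of $S$ occurs at some position $>1$ (a reoccurrence of a long prefix drags every shorter prefix along with it, so it then recurs infinitely often). By Lemma \ref{furtherintegration} the integral-prefixes $(s_1\cdots s_n)^{-1}$ are themselves prefixes, and by \eqref{integrallength} their lengths diverge, so it suffices to test recurrence on this cofinal family. Throughout I would keep in mind that for a prefix $w$ the $S$-integral coincides with $w^{-1}$ and that $k$-regularity reduces to the successive lengths $|w|,|w^{-1}|,\dots,|w^{-k}|$ all being even; since $|w^{-1}|=\Sigma w$, these are exactly the parities of the partial sums generated by iterating \eqref{subs}.

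The engine is a desubstitution correspondence. Recall that $S=S^{-1}$ expands each $s_j$ by \eqref{subs} into a block of length $s_j$ consisting of $1$'s if $j$ is odd and of $2$'s if $j$ is even, and that $S$ is the concatenation of these blocks. Writing $u_j:=s_1\cdots s_j$, the key observation I would establish is this: if the even-length prefix $p=s_1\cdots s_n$ reoccurs at an \emph{even} shift $i$, i.e.\ $s_{i+1}\cdots s_{i+n}=p$, then blocks $i+1,\dots,i+n$ reproduce blocks $1,\dots,n$ — the same lengths since $s_{i+j}=s_j$, and the same constant symbol since $i$ even forces $i+j$ and $j$ to have equal parity — so $p^{-1}$ reoccurs at the block boundary in position $|u_i^{-1}|+1=\Sigma u_i+1$; conversely, a reoccurrence of $p^{-1}$ starting at a block boundary desubstitutes to an even-shift reoccurrence of $p$ (this is exactly Definition \ref{subsint} read on subrows, with legality governed by Lemma \ref{findderivativeS}). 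The point is that to iterate this climb from $p$ to $p^{-1}$ to $p^{-2}$ and so on, the successive shifts $|u_i|,|u_i^{-1}|,\dots$ must stay even, which is precisely $k$-regularity of the gap prefix $u_i$, while the successive lengths $|p^{-h}|$ staying even is $k$-regularity of $p$ itself.

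For the direction recurrence $\Rightarrow$ existence of $k$-regular prefixes I would argue by induction on $k$, boosting the regularity order with Lemma \ref{Syntax}(6): given a $k$-normal prefix, recurrence supplies further copies, and the correspondence above lets me locate one abutting a $k$-normal prefix at an even (block-aligned) shift, so that the concatenation of two adjacent $k$-normal subrows yields a $(k+1)$-regular subrow which is again a prefix of the correct type. For the converse I would run the same correspondence downward: from a $k$-regular prefix with $k$ large I desubstitute $k$ times, each step legal by Lemmas \ref{findderivativeS} and \ref{find_derivative} (every factor lies in $C^{\infty}$ by Lemma \ref{Cinfty}), to manufacture, at controlled block boundaries, a genuine reoccurrence of any prescribed short prefix; equivalently, in contrapositive form, if some prefix never reoccurs then no reoccurrence of its integrals is block-aligned, and the correspondence then caps the regularity order attainable by any prefix.

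The hard part, in both directions, will be the boundary and parity alignment. A reoccurrence of $p^{-1}$ produced by mere recurrence need not start at a block boundary — since $p^{-1}$ begins with a single-symbol block, an occurrence sitting one symbol inside a length-two block matches the letters but desubstitutes to the wrong word — so the correspondence only sees \emph{even}, block-aligned shifts, and I must show that recurrence can always be realized at such shifts. I expect to control this using that $S$ is built from blocks of length at most $2$ (the extremal $C^{\infty}$ densities being realized by $11211$ and $22122$), together with the edge-cutting built into the definition of the derivative, so that the single-symbol ambiguities at the two ends of a factor can be resolved; the estimates of Lemma \ref{lengthestimates} then guarantee, via \eqref{integrallength}, that climbing finitely many levels reaches prefixes long enough to contain the required copies. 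Making this alignment bookkeeping exact — rather than the clean identity $|w^{-1}|=\Sigma w$ — is where the real work lies.
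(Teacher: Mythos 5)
Your desubstitution correspondence is sound --- it is exactly how occurrences interact with integration, and it is the same mechanism that drives the paper's proof --- but your plan for the direction (recurrence $\Rightarrow$ arbitrarily regular prefixes) has a genuine gap, and it is precisely the step you defer as ``where the real work lies.'' Your induction needs, immediately adjacent to a $k$-normal prefix, a $k$-normal \emph{subrow}, so that Lemma \ref{Syntax}(6) can be applied. But normality of a subrow is a property of its position, not of the word (it depends on the parities of the $S$-integrals of the gap prefix sitting in front of it), whereas recurrence only supplies occurrences of a given word at positions about which nothing whatsoever is known --- not the parity of the shift, not block alignment, and certainly not adjacency to a prescribed prefix. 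Nothing in your proposal produces such an aligned, adjacent occurrence; the remark that $S$ has runs of length at most $2$, and the edge-cutting in the derivative, give no control over the parity of a shift. This alignment is not bookkeeping: it is the entire content of the lemma, and as written your induction cannot get off the ground.

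The paper sidesteps the problem by never trying to align occurrences: it works top-down rather than bottom-up. Recurrence gives a prefix of the form $wvw$ with $|w|,|v|$ arbitrarily large, with no alignment hypothesis at all. One then $S$-differentiates repeatedly, obtaining prefixes of the form $p_ju_jp_j$ for $j=1,\dots,k$, where $k\to\infty$ as $|w|\to\infty$ by Lemma \ref{lengthestimates}. The parity control then comes for free: each $p_j$ is a prefix of $S$, hence starts with $1$, and the copy of $p_j$ standing right after the prefix $p_ju_j$ must integrate onto the copy of $p_{j-1}$ standing right after $p_{j-1}u_{j-1}$; since both copies begin with $1$, the rules \eqref{subs} force the position $|p_ju_j|+1$ to be odd, i.e.\ $|p_ju_j|$ to be even, at every level. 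So $p_ku_k$ is automatically a $(k-1)$-regular prefix: the parities are forced by the structure, not arranged by hand. Your converse direction is also inverted as stated --- desubstituting a $k$-regular prefix manufactures no reoccurrence; the seed is produced by \emph{integration} together with the forced elements after a regular prefix ($w^{-1}1$ and $w^{-2}12$ are prefixes), which places the prefix $12$ at an even, block-aligned shift $|w^{-2}|$, after which your lifting correspondence applied $k-2$ times yields the long reoccurring prefix $(12)^{-k+2}$. That half is repairable inside your framework; the recurrence-to-regularity half is not, without the top-down idea.
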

\begin{proof}
Suppose that we can find a $k$-regular prefix $w$ of $S$ for every non-negative integer $k$.

First of all notice that $w\in k$-R implies that $|w|>2$, which in turn implies that $|w^{-a}|$ is strictly larger than $|w^{-b}|$ for every choice $a,b$ of positive integers such that $a>b$ (as there are no runs of consecutive 1s longer than 2 in $w$). 
From $w\in k$-R it follows that (as soon as $k \ge 1$) $|w|$ and $|w^{-1}|$ are even prefixes of $S$, so that $s_{|w|+1}$ and $s_{|w^{-1}|+1}$ are both odd-indexed elements of $S$. Then from Lemma \ref{furtherintegration}, recalling the substitution rules \eqref{subs}, it follows that  $w^{-1}1$ and $w^{-2}12$ are both prefixes for $S$. 
 
Integrating further we find a prefix of the form
\begin{equation}
(w^{-2}12)^{-k+2}=w^{-k}(12)_S^{-k+2}=w^{-k}(12)^{-k+2}
\label{recurrence}
\end{equation}
where the last equality is again due to the fact that $w\in k$-R.
By Lemma \ref{blocks} the last factor in the right hand side of \eqref{recurrence} coincides with a prefix of $S$. Recalling \eqref{integrallength}, this prefix is arbitrarily long  if $k$ is large enough, which is sufficient to conclude that $S$ is recurrent. 

Conversely, suppose that $S$ is recurrent. This implies, in particular, that arbitrarily long prefixes of $S$ are repeated infinitely many times, thus for every positive integer $N$ there is a prefix $w$ and a prefix of the form $wvw$  such that both $|w|$ and $|v|$ are larger than $N$. By suitably selecting $w$, we can assume that the last element of $w$ is not equal to the first element of $v$. Moreover, since $w$ starts with 12211, by Lemma \ref{Cinfty} the last element of $v$ has to be different from the first element of $w$, as otherwise $vw\notin C^{\infty}$. Therefore, by Lemma \ref{findderivativeS}, there exists a nonempty word $u_1$ such that, setting $p_1:=(w_S)'$, the word $p_1 u_1 p_1$ is also a prefix for $S$. 

 We then define recursively $$p_{i+1}:=(S_{1,|p_i|})_S'$$ and $$u_{i+1}:=(u_i)_S'$$ in case $s_{|p_i|}\ne s_{|p_i|+1}$. If instead $s_{|p_i|}= s_{|p_i|+1}$ we replace, in the definition of $p_{i+1}$ and $u_{i+1}$,  $p_i$ with the largest of its prefixes admitting an $S$-derivative and $u_{i}$ with the smallest subrow having $u_{i}$ as a suffix and admitting an $S$-derivative. 

 Since $|v|$ can be arbitrarily large and recalling \eqref{derivativediverges}, we have that $$p_k u_k p_k$$ is a prefix of $S$ (with $u_k\ne \epsilon$) for every $k$ such that $|p_k|\ge 2$. Therefore $p_k$ starts with 1 for every $k$ such that $|p_k|\ge 2$. Since $p_k$ also follows the prefix $p_ku_k$ in $S$, this means, recalling the substitution rules \eqref{subs}, that the first $k-1$ integrals of the prefix $p_{k} u_{k}$ have even length. By Lemma \ref{lengthestimates} it follows that $k \rightarrow \infty$ when $|w|\rightarrow \infty$.   
\end{proof}
\begin{lemma}
$S$ is mirror invariant if and only if for every positive integer $n$ there is a $k$-normal prefix of $S$ with $k>n$.
\label{mirroring}
\end{lemma}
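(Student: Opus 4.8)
The plan is to route both implications through the intermediate assertion that $\widetilde{(12)^{-k}}$ is a subword of $S$ for arbitrarily large $k$. This assertion is equivalent to mirror invariance: since $|(12)^{-k}|\to\infty$ by \eqref{integrallength} and every $(12)^{-k}$ is a prefix of $S$, each fixed prefix $p$ is a prefix of $(12)^{-k}$ for $k$ large, so $\widetilde{p}$ is a prefix of $\widetilde{(12)^{-k}}$; conversely every subword of $S$ is a factor of some prefix $p$, and mirroring carries factors of $p$ to factors of $\widetilde{p}$. Hence $S$ is mirror invariant if and only if the mirror of every prefix occurs, if and only if $\widetilde{(12)^{-k}}$ occurs for unbounded $k$.

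For the ``if'' direction I would argue constructively, extending the tower built in the proof of Lemma \ref{recur}. Let $w$ be a $k$-normal prefix. Because $w$ is $k$-\emph{regular}, all the lengths $|w^{-h}|$ with $h\le k$ are even, so the appended block always starts at an odd offset and integrates without mirroring; iterating the computation that yields \eqref{recurrence} (together with Lemma \ref{furtherintegration}) produces the prefix $w^{-(k+1)}(12)^{-(k-1)}$. The decisive step is one further integration: since $w$ is $k$-\emph{normal}, $|w^{-(k+1)}|$ is odd, so the trailing block $(12)^{-(k-1)}$ now begins at an even offset and, by Remark \ref{remsint}, its $S$-integral is the mirror of its integral. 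This gives the prefix $w^{-(k+2)}\widetilde{(12)^{-k}}$, so $\widetilde{(12)^{-k}}$ occurs; as the hypothesis supplies $k$-normal prefixes with $k$ unbounded, the intermediate assertion follows and $S$ is mirror invariant.

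For the converse I would reverse this descent. Mirror invariance gives, for each large $k$, an occurrence $S_{a,b}=\widetilde{(12)^{-k}}$; applying the $S$-derivative to the prefix $S_{1,b}$ repeatedly peels off one integration layer at a time, so the trailing factor climbs back down the chain $\widetilde{(12)^{-k}}\to(12)^{-(k-1)}\to\cdots$ while the leading part contracts toward a prefix $w$. By Remark \ref{remsint} the parity of the offset at each level records whether the mirror is present, and the level at which it disappears is exactly the order up to which $w$ is regular but no further, i.e. its normality order; since $k$ is unbounded and, by Lemma \ref{lengthestimates}, the factor $\widetilde{(12)^{-k}}$ requires of order $k$ derivatives to contract to bounded length, this order tends to infinity with $k$.

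I expect the converse descent to be the main obstacle, for the same reasons as in the converse half of Lemma \ref{recur}: the boundary clauses of Lemma \ref{findderivativeS} mean that when $s_{a-1}=s_a$ or $s_b=s_{b+1}$ the factor admits no $S$-derivative, so one must pass to a slightly trimmed or enlarged occurrence before continuing, and one has to check that these corrections do not eat into the surviving level count. The qualitative point that must be preserved throughout, and what distinguishes the present Lemma from Lemma \ref{recur}, is that the prefixes extracted are genuinely \emph{normal} rather than merely \emph{regular}: the single parity flip that converts an appended $(12)$-block into its mirror is precisely a failure of regularity at one level, so an $\infty$-regular prefix, whose integral lengths never become odd, could never manufacture a mirrored factor. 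The delicate part is therefore to certify that the level at which the descent unmirrors is an honest level of non-regularity of a true prefix, and that it can be pushed to infinity as $k\to\infty$.
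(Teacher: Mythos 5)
Your ``if'' direction is correct and is essentially the paper's own argument: the same tower $w^{-h}(12)^{-h+2}$, $h\le k+1$, lifted from the proof of Lemma \ref{recur}, the same single extra integration that converts the oddness of $|w^{-(k+1)}|$ (this is exactly $k$-normality) into the prefix $w^{-(k+2)}\widetilde{(12)^{-k}}$ via Remark \ref{remsint}, and the same closing observation that mirroring commutes with concatenation/factors of prefixes. Your converse also follows the paper's strategy in substance --- descend by $S$-derivatives from a prefix ending in a mirrored occurrence and let Remark \ref{remsint} turn the mirror into a single parity flip at the far end of the chain --- with different bookkeeping: the paper descends from a prefix of the form $wv\widetilde{w}$ and uses the leading copy of $w$ to name the descended trailing factor, while you use the canonical prefixes $(12)^{-(k-j)}$, which is a legitimate and in some ways cleaner choice.

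The converse nevertheless has a genuine gap, located exactly where you flagged it, but your proposed repair points in the wrong direction. The whole parity chain (leading part of odd length at the top level, even length at every deeper level, hence the deepest leading part is normal of order roughly the depth $J$ of the descent) is read off from the \emph{first digit} of the trailing factor at each level: by Remark \ref{remsint}, the leading prefix has even length precisely when the trailing word one level up begins with 1, since a plain integral must begin with 1 and a mirrored integral with 2. If at some level the left edge of the trailing factor failed to lie at a run boundary and you ``trimmed'' its first element into the leading part, then from that level downward the trailing word would begin with 2 (the run-length encoding of $2211\cdots$ again begins with 2), every deeper leading part would have odd length, and the deepest one would not even be $0$-regular; the usable normality order would be capped at the level of the first such correction, and nothing in your argument forces that cap to grow with $k$. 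So left-edge corrections cannot be ``absorbed'' into the level count --- they must be shown never to occur. That is the missing idea, and it is the one the paper gets from Lemma \ref{Cinfty}: since $\widetilde{(12)^{-k}}$ begins with $21122$ and every descended factor begins with $12211$, the digit of $S$ immediately preceding any occurrence is forced (a preceding 2, resp.\ 1, would produce $221122$, resp.\ $112211$, whose derivative contains $222$, impossible for a $C^{\infty}$ word), and the same argument propagates down the descent, so the left boundary sits at a run boundary at every level automatically. The only corrections ever needed are then at the right end of the trailing factor, and those are harmless because they touch neither its first digit nor the length of the leading prefix. With this ingredient your descent does produce a $(J-2)$-normal prefix (note also that the normality order is not ``the level at which the mirror disappears'' --- that is always the very first derivative --- but the number of clean levels strictly below the flip), and Lemma \ref{lengthestimates} gives $J\to\infty$ with $k$; without it, the final claim of your sketch is unproven.
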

\begin{proof}
 Suppose that $w$ is a $k$-normal prefix of $S$. As seen in the proof of Lemma \ref{recur}, $w^{-h}(12)^{-h+2}$ is also a prefix for $S$ for every $h\le k+1$. Since $|w^{-k-1}|$ is odd, integrating further and recalling Lemma \ref{furtherintegration} one gets the prefix $w^{-k-2}\widetilde{v}$, where $v=(12)^{-k}$.  By Lemma \ref{lemma1}, $v$ is also a prefix of $S$, and $|v|$ is arbitrarily large if $k$ is large enough. Since concatenation commutes with the mirror operation (that is: $\widetilde{uv}=\widetilde{u}\widetilde{v}$ for every $u,v \in \mathcal{A^+}$), this is sufficient to conclude that $S$ is mirror invariant. 

 Conversely, suppose that $S$ is mirror invariant and let $w$ be a prefix of $S$ such that its last element is not equal to the following element of $S$. By mirror invariance there is a prefix of the form $wv\widetilde{w}$. Let us define the subwords $p_n$ and $u_n$ as done in the previous proof, and let $\bar{n}$ be the largest integer for which $|p_n|\ge 2$. Since $S$-derivatives of mirror words coincide as subwords, there are prefixes of the form $$p_n u_n p_n$$ with $u_n$ nonempty for every positive integer $n\le\bar{n}$ (notice that this means that $S$ is recurrent). As $p_n$ is a prefix of $w$ and so starts with 1 for every $n\le\bar{n}$, \eqref{derivativediverges} ensures that the prefix $p_n u_n$ is $k$-regular for arbitrarily large positive integers $k$ if $|w|$ and $|v|$ are chosen large enough.  Moreover, since $\widetilde{w}$ starts with 2 and $(p_1u_1p_1)^{-1}=wv\widetilde{w}$  by hypothesis, $|p_1u_1|$ has to be odd, and therefore the prefix $p_{\bar{n}}u_{\bar{n}}$ is $(\bar{n}-2)$-normal, where $\bar{n}-2$ can be arbitrarily large if $|w|$ is chosen large enough.    
\end{proof}
 It is known that mirror invariance is equivalent to: every $C^{\infty}$ finite word occurs in $S$ \cite{dekking1997long}. One implication is obvious, while the other (whose proof was left to the reader in \cite{dekking1997long}) easily follows from the fact that mirror invariance implies that, if $w$ does not occur in $S$, neither does $w'$. This result and Lemma \ref{mirroring} mean that
\begin{lemma}
Every $C^{\infty}$ word is a subword of $S$ if and only if for every positive integer $n$ there is a $k$-normal prefix of $S$ with $k>n$.
\label{subwords}
\end{lemma}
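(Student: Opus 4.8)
The plan is to obtain Lemma~\ref{subwords} as a purely formal consequence of two equivalences chained by transitivity. The first is Lemma~\ref{mirroring} itself, which already states that $S$ is mirror invariant if and only if for every positive integer $n$ there is a $k$-normal prefix of $S$ with $k>n$. The second is the equivalence announced just above the statement, namely that $S$ is mirror invariant if and only if every $C^{\infty}$ finite word occurs in $S$. Granting both, the two right-hand conditions are each equivalent to mirror invariance, hence equivalent to one another, which is exactly the assertion of Lemma~\ref{subwords}. So the only real work is to pin down the second equivalence; everything else is substitution of one characterization for another.

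For that second equivalence I would argue the two directions separately. The direction ``every $C^{\infty}$ word occurs $\Rightarrow$ mirror invariance'' is the easy one: by Lemma~\ref{Cinfty} every subword of $S$ lies in $C^{\infty}$, and since the derivative of a word and the derivative of its mirror coincide as words (the derivative records only run lengths, which the mirror leaves unchanged), $C^{\infty}$ is closed under the mirror operation $v\mapsto\widetilde v$. Hence for every subword $w$ of $S$ the word $\widetilde w$ is again $C^{\infty}$ and therefore occurs, which is precisely mirror invariance. The converse, ``mirror invariance $\Rightarrow$ every $C^{\infty}$ word occurs'', is the substantive half and the one left to the reader in \cite{dekking1997long}. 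Here I would run a descent on the derivative: assuming mirror invariance, I would prove the key claim that a $C^{\infty}$ word $w$ occurs in $S$ whenever its derivative $w'$ does. Iterating the contrapositive, a non-occurring $C^{\infty}$ word would force $w',w'',\dots$ all to be non-occurring; but by \eqref{derivativelength} the lengths $|w^{(k)}|$ decrease to at most $2$, and every word of length $\le 2$ occurs in $S$, giving a contradiction. Thus every $C^{\infty}$ word occurs.

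The hard part is the key claim, i.e.\ recovering $w$ from an occurrence of $w'$. Since $w'$ occurs as a subrow, its $S$-integral occurs as a subrow, and by Remark~\ref{remsint} this $S$-integral is $(w')^{-1}$ or its mirror; mirror invariance then puts both $(w')^{-1}$ and $\widetilde{(w')^{-1}}$ among the subwords of $S$. By the definition of the derivative, $(w')^{-1}$ is $w$ with its boundary single digits cut off, so this already exhibits the \emph{interior} of $w$ inside $S$; the genuine difficulty is to grow back the at most two trimmed end digits. This is where mirror invariance (and the recurrence it implies, via \cite{dekking1997long}) is essential: one uses that a trimmed word occurs infinitely often and that runs in $S$ have length at most $2$ to locate an occurrence whose one-step extension on each side reproduces exactly the missing digit of $w$. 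I expect this boundary-reconstruction step to be the only delicate point; once it is in place, the descent and the concluding transitivity argument are routine.
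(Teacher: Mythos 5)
Your top-level architecture is exactly the paper's: the paper proves this lemma by chaining Lemma \ref{mirroring} with the equivalence from \cite{dekking1997long} (mirror invariance holds if and only if every $C^{\infty}$ word occurs in $S$), and it disposes of the non-obvious direction of that equivalence by the same descent you describe, resting on the key fact that, under mirror invariance, if $w'$ occurs in $S$ then so does $w$. Your easy direction, your descent on derivatives, and the final transitivity step are all correct and coincide with what the paper does.

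The genuine gap is inside the key fact, at precisely the step you flag as delicate. Your plan is to use recurrence to produce infinitely many occurrences of the trimmed word $(w')^{-1}$ and then to ``locate'' one whose one-step extensions restore the missing digits. This does not work as stated: when the first (or last) run of $(w')^{-1}$ has length $1$, an occurrence of $(w')^{-1}$ may be preceded (or followed) by that very same digit, i.e. it may sit misaligned inside a run of length $2$ of $S$, and having infinitely many occurrences does not by itself exclude that \emph{all} of them are misaligned. The missing idea is that you should keep the particular occurrence you already have rather than hunt for a better one: the $S$-integral of the occurrence of $w'$ is automatically aligned. Writing $S=u\,w'\cdots$ as subrows, we have $S=u^{-1}(w')_S^{-1}\cdots$, and the last digit of $u^{-1}$ and the first digit of $(w')_S^{-1}$ are images under \eqref{subs} of two consecutive elements of $S$, hence lie in runs of opposite digits; the same holds at the right end. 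So $(w')_S^{-1}$ is flanked by digits differing from its first and last digits, and on a binary alphabet these flanking digits are forced to be exactly the trimmed digits of $w$ (when $(w')_S^{-1}=(w')^{-1}$) or of $\widetilde{w}$ (when $(w')_S^{-1}=\widetilde{(w')^{-1}}$). In the first case $w$ occurs outright, with no invariance assumption at all; in the second case $\widetilde{w}$ occurs and mirror invariance yields $w$. Recurrence enters only marginally, to guarantee an occurrence of $w'$ that is not a prefix (so that a left flank exists), not to select a well-flanked occurrence of the trimmed word, contrary to the role you assign it.
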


 Concerning uniform recurrence, we have the following
\begin{lemma}
$S$ is uniformly recurrent if and only if, for every positive integer $n$, $S$ can be written as an infinite concatenation of $k$-regular subrows of bounded length with $k>n$.
\label{uniformly}
\end{lemma}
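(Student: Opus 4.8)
The plan is to restate everything in terms of the set of lengths at which $k$-regular prefixes of $S$ end. Put $\Pi_k:=\{0\}\cup\{p\ge 1:\ S_{1,p}\in k\text{-R}\}$. By Lemma~\ref{Syntax}.5 a concatenation of consecutive $k$-regular subrows is again $k$-regular, so the partial sums of the block lengths of any tiling of $S$ into $k$-regular blocks all lie in $\Pi_k$; conversely every strictly increasing sequence in $\Pi_k$ starting at $0$ induces such a tiling, thanks to the complementary \emph{subtraction} property: if $S_{1,a}$ and $S_{1,b}$ (with $a<b$) are both $k$-regular then so is $S_{a+1,b}$. This property follows from Lemma~\ref{furtherintegration} (the $S$-integral of a prefix is a prefix) together with the additivity of $\Sigma$ under concatenation: these give $(S_{a+1,b})_S^{-h}=S_{|S_{1,a}^{-h}|+1,\ |S_{1,b}^{-h}|}$, whose length $|S_{1,b}^{-h}|-|S_{1,a}^{-h}|$ is even for every $h\le k$. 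Consequently a bounded-length tiling into $k$-regular blocks exists iff $\Pi_k$ is \emph{syndetic} (has bounded gaps), and the Lemma is equivalent to: $S$ is uniformly recurrent iff $\Pi_k$ is syndetic for every $k$.

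For the implication ``tiling $\Rightarrow$ uniform recurrence'' I would reuse the integration mechanism of Lemma~\ref{recur}. First reduce uniform recurrence to the statement that every prefix of $S$ recurs with bounded gaps (an arbitrary subword is a suffix of some prefix $Q$, and every occurrence of $Q$ carries an occurrence of that subword). Given a prefix $Q$, choose $k$ so large that $|(12)^{-k+2}|\ge|Q|$ (possible by \eqref{integrallength}) and that $\Pi_k$ is syndetic. For each $k$-regular prefix $P$ the word $P^{-k}(12)^{-k+2}$ is a prefix of $S$ (exactly as derived in Lemma~\ref{recur}), and $(12)^{-k+2}$ is itself a prefix of $S$ (by Lemma~\ref{furtherintegration}, since $12$ is); hence $Q$ occurs starting at position $|P^{-k}|+1$. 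As $P$ ranges over consecutive $k$-regular prefixes the lengths $|P|$ have bounded gaps, and by Lemma~\ref{lengthestimates} so do the lengths $|P^{-k}|$ of their integrals; therefore $Q$ recurs with bounded gaps, giving uniform recurrence.

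For the converse, ``uniform recurrence $\Rightarrow$ syndeticity of every $\Pi_k$'', I would run the derivative-tower construction used in the converse parts of Lemmas~\ref{recur} and~\ref{mirroring}, but now fed by the \emph{syndetic} repetitions that uniform recurrence supplies. Fixing a prefix $w$ with $s_{|w|}\ne s_{|w|+1}$, uniform recurrence yields a factorization $S=w\,v_0\,w\,v_1\,w\cdots$ with the $|v_i|$ bounded; applying the map $p\mapsto (p)_S'$ and the attendant construction of the auxiliary words $p_i,u_i$ repeatedly produces, out of each repetition $w\,v\,w$, a $k$-regular prefix with $k\to\infty$ as $|w|$ grows (this is precisely the regularity extraction already carried out in Lemma~\ref{recur}). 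The boundedness of the $|v_i|$ and of the derivative/integration distortion (Lemma~\ref{lengthestimates}) should then force the resulting $k$-regular prefixes to occur at syndetically spaced lengths, i.e.\ $\Pi_k$ syndetic.

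The main obstacle is exactly this last propagation. Regularity of a subrow is a \emph{global}, position-dependent property (Remark~\ref{remsint}): uniform recurrence controls occurrences of \emph{subwords}, whereas what is needed are $k$-regular \emph{occurrences}, whose detection involves the parities of the lengths $|S_{1,p}^{-h}|$ all along $S$. Thus the delicate point is to show that the syndetic repetitions do not merely yield \emph{some} $k$-regular prefix but produce them at boundaries spaced by a bounded amount, with the regularity order $k$ simultaneously made large — in other words, that the phase carried by the derivative tower stays synchronized with the global block structure of $S$ as one integrates back up. Quantifying this synchronization, uniformly in $k$, is where the real work lies.
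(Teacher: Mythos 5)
Your $\Pi_k$ reformulation and your forward direction (tiling $\Rightarrow$ uniform recurrence) are sound, and they follow essentially the paper's own route: integrate the tiling $k$ times, recognize that $(12)^{-k+2}$ is a prefix of $S$ sitting right after every block boundary, and bound the growth of the gaps under integration by Lemma \ref{lengthestimates}. The subtraction property you prove (a difference of two $k$-regular prefixes is a $k$-regular subrow) is exactly what the paper invokes, somewhat tersely, via Lemma \ref{Syntax}.

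The genuine gap is the converse, and you name it yourself: you never show that the $k$-regular prefixes produced by the derivative tower occur at boundaries with bounded gaps, and you leave this ``synchronization'' as an open obstacle. The missing idea is that one should not extract regularity from each repetition $w\,v_i\,w$ separately and then try to compare the outputs; instead one differentiates the \emph{whole} syndetic factorization $S = w\,u^1\,w\,u^2\,w\dots$ simultaneously. Since $S$-derivatives of occurrences of the same subword coincide as subwords (the digits of a derivative are run lengths, which do not depend on the position's parity), each derivative level is again a global factorization $S = p_i\,u_i^1\,p_i\,u_i^2\dots$ (after the same boundary adjustments as in Lemma \ref{recur}), with every copy of $p_i$ equal to the prefix $p_i$ of $S$ and hence beginning with 1. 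This single fact forces, at every level $i\le k$ and for every $j$, the length of the partial prefix $p_i u_i^1\cdots p_i u_i^j$ to be even: the element following it is the 1 starting the next copy of $p_i$, and under the rules \eqref{subs} a 1 can only arise from an element at odd position. Hence \emph{all} the prefixes $p_k u_k^1\cdots p_k u_k^j$ are $(k-1)$-regular at once; the phase coherence you were trying to quantify ``uniformly in $k$'' is automatic, because it is one and the same factorization that is being integrated back up at all levels. Your subtraction property then tiles $S$ into $(k-1)$-regular blocks $p_k u_k^j$ (nonempty by \eqref{derivativediverges}, taking the gap bound $M$ large enough), and since differentiation never increases the length of a subrow, $|p_k|\le |w|$ and $|u_k^j|\le |w|+M$, so the blocks have length at most $2|w|+M$. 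This is the paper's proof of the converse; as written, your proposal establishes only the tiling-implies-uniform-recurrence half.
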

\begin{proof}
Suppose that, for every positive integer $N$, there exists a sequence of subwords $w_i\, (i\in \mathbb{N})$ and a positive integer $M$ such that $|w_i|<M$ for every $i$ and 
\begin{equation}
S=w_1w_2\dots 
\label{w_i}
\end{equation}
with every $w_i\in k$-R and $k>N$. Then integrating \eqref{w_i} $k$ times yields
\begin{equation*}
S=w_1^{-k}w_2^{-k}\dots 
\end{equation*} 
Since $|w_i^{-h}|$ is even for every $h\le k$, the word $(12)^{-k+2}$ is a prefix of $w_i^{-k}$ for every $i\ge 2$, and by Lemma \ref{lemma1} it is also a prefix of $S$. Since, by Lemma \ref{lengthestimates}, $|w_i^{-k}|<M\left(\frac{9}{5}\right)^k$, it follows that $S$ is uniformly recurrent.

Conversely, suppose that $S$ is uniformly recurrent. Then, for every prefix $w$, $S$ can be written as
\begin{equation}
S=wu^1wu^2w\dots 
\label{unif}
\end{equation} 
with $2<|u^i|<M$ for every $i$ for some $M>0$. We can assume that $w$ ends with $11$ or $22$, so that its last element is not equal to the first element of $u^i$ for every $i$. Since $w$ starts with $12211$, every $u^i$ has to end with $2$, otherwise a subword which is not $C^{\infty}$ would occur in $S$, which is not possible by Lemma \ref{Cinfty}. By Lemma \ref{findderivativeS} we then have
\begin{equation*}
S=w_S'(u_1)_S'w_S'(u_2)_S'w_S'\dots 
\end{equation*}
Taking $w$ is long enough and defining the subwords $p_i$ ($i=1,\dots,k$) as in the proof of Lemmas \ref{recur} and \ref{mirroring} and the subrows $u_{i}^j$ ($j=1,2,\dots$) accordingly, we can iterate $k$ times the argument, so as to obtain
\begin{equation*}
S=p_ku_k^1p_ku_k^2 p_k\dots 
\end{equation*}
Since $M$ can be chosen arbitrarily large (by simply neglecting a suitable number of occurrences of $w$ in $S$ if needed), the limit behaviour \eqref{derivativediverges} means that $|u_k^j|$ can be made nonempty for every positive integer $j$ and for arbitrarily large $k$. Therefore, for every positive integer $n$ and for arbitrarily large $k$ we can find prefixes of the form $$p_ku_k^1\dots p_ku_k^n$$ and noticing that $p_i$ begins with 1 for every positive integer $i<k$, it follows that these prefixes are $(k-1)$-regular. Then, by Lemma \ref{Syntax}, so is every subrow $S_{a,a+b}$ where $a=|p_{k}u_{k}^h|+1$ and $b=|p_{k}u_{k}^{h+1}|$ ($ h=1,2,\dots, k-1$). Recalling the definition of $p_i$ and $u_i^j$, and that differentiation cannot increase the length of subrows, the inequalities $|p_{k}|\le |w|$ and $|u_{k}^j|\le |w|+M$ follow, so that each subrow of type $p_{k}u_{k}^j$ has length not larger than $2|w|+M$, which concludes the proof.  
\end{proof}

\begin{rem}
Lemmas \ref{recur}, \ref{mirroring}, \ref{subwords} and \ref{uniformly} can be straightforwardly adapted to generalized Kolakoski sequences defined over binary alphabets $\left\lbrace m,n\right\rbrace$ other than $\mathcal{A}$ if we define analogously the concepts of regularity and normality of subrows.  
\end{rem}
On generalized Kolakoski words we mention the works by Sing \cite{sing2004kolakoski,sing2010more} and, in an interesting but slightly different direction compared to typical Kolakoski literature, by Shen \cite{shen2018kolakoski}. 
\section{Main results}
\label{mr}
 Let us start by observing that the existence of an $\infty$-regular prefix of $S$ would have strong consequences on its structure and properties, as $S$ would then be recurrent and would have a rigidly fractal structure. 

 More precisely, we establish the following
\begin{theorem}
Suppose that $S$ has an $\infty$-regular prefix $w$ and let $k$ be a positive integer large enough so that $|(12)^{-k-2}|>|w|$. Then, for every positive integer $n$, $S$ has a prefix with the following structure:
\begin{equation}
w^{-nk}w^{-(n-1)k}\cdots w^{-k}w
\label{inftyreg}
\end{equation} 
In particular, $S$ is recurrent.  
\label{infty}
\end{theorem}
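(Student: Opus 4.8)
The plan is to prove \eqref{inftyreg} by induction on $n$. Write $P_n:=w^{-nk}w^{-(n-1)k}\cdots w^{-k}w$, so that $P_0=w$ and the assertion is that $P_n$ is a prefix of $S$; I will carry along the stronger inductive hypothesis that each $P_n$ is itself an $\infty$-regular prefix. The recurrence conclusion then costs nothing: an $\infty$-regular prefix is $k$-regular for every $k$, so the mere existence of $w$ already meets the criterion of Lemma \ref{recur} and forces $S$ to be recurrent. Everything else hinges on the single identity $P_n=P_{n-1}^{-k}\,w$, whose two halves I treat separately: (i) that $P_{n-1}^{-k}=w^{-nk}w^{-(n-1)k}\cdots w^{-k}$, and (ii) that $w$ actually follows this prefix in $S$.

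For (i) the point is that $\infty$-regularity makes integration act block-by-block with no mirror ever appearing. Since all the lengths $|w^{-h}|$ are even, in $P_{n-1}^{-h}$ each block $w^{-(jk+h)}$ is preceded only by even-length blocks and hence begins at an odd position of $S$ (the leading block beginning at position $1$); by Remark \ref{remsint} a subrow starting at an odd position has $S$-integral equal to its ordinary integral, so $(w^{-(jk+h)})^{-1}=w^{-(jk+h+1)}$ with no mirroring. Iterating this $k$ times gives $P_{n-1}^{-k}=w^{-nk}w^{-(n-1)k}\cdots w^{-k}$, and since every block stays of even length at every level, $P_{n-1}^{-k}$ — and, after step (ii) appends $w$, also $P_n$ — is again $\infty$-regular (alternatively one invokes Lemma \ref{Syntax}(5) on $P_{n-1}^{-k}$ and the occurrence of $w$ right after it).

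For (ii) I reuse the computation from the proof of Lemma \ref{recur}: for any $\infty$-regular prefix $q$ the words $q^{-1}1$ and $q^{-2}12$ are prefixes of $S$, and integrating $q^{-2}12$ a further $k-2$ times (again mirror-free, because all $|q^{-h}|$ are even) produces, exactly as in \eqref{recurrence}, the prefix $q^{-k}(12)^{-(k-2)}$. Taking $q=P_{n-1}$ shows that in $S$ the prefix $P_{n-1}^{-k}$ is followed by $(12)^{-(k-2)}$. The decisive feature is that this trailing power has exponent $k-2$ \emph{independent of $n$}. By Lemma \ref{lemma1} both $(12)^{-(k-2)}$ and $w$ are prefixes of $S$, and, $k$ being large, $|w|\le|(12)^{-(k-2)}|$; hence $w$ is the shorter of the two and therefore a prefix of $(12)^{-(k-2)}$. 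Thus $w$ follows $P_{n-1}^{-k}$, i.e. $P_n=P_{n-1}^{-k}w$ is a prefix, closing the induction.

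The main obstacle — and the reason the hypothesis must be full $\infty$-regularity rather than $k$-regularity for one fixed $k$ — is the parity bookkeeping: a single odd value among the $|w^{-h}|$ would flip one block to its mirror and wreck both identities above. What keeps the induction uniform in $n$ is the observation in (ii) that the gap following $P_{n-1}^{-k}$ is always filled by the \emph{same} word $(12)^{-(k-2)}$, so that one length condition on $k$ works simultaneously for every $n$ and no growing comparison between $|P_{n-1}|$ and a power of $12$ is ever needed; the inequality $|w|\le|(12)^{-(k-2)}|$ that is actually used holds for all sufficiently large $k$ because $|(12)^{-m}|\to\infty$ by Lemma \ref{lengthestimates}, which is exactly the content of the stated bound on $k$.
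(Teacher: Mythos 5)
Your proof is correct and takes essentially the same route as the paper's: mirror-free, block-wise integration guaranteed by $\infty$-regularity, the computation from Lemma \ref{recur} showing that $P_{n-1}^{-k}$ is followed by $(12)^{-(k-2)}$, and Lemma \ref{lemma1} together with the length condition on $k$ to conclude that $w$ is a prefix of that gap word, hence follows $P_{n-1}^{-k}$. Your explicit induction (carrying $\infty$-regularity of $P_n$ along and re-applying the gap argument at every step) merely spells out what the paper compresses into ``integrating further for $k$ times,'' and your reading of the hypothesis as $|(12)^{-(k-2)}|>|w|$ is indeed the inequality the argument needs --- the exponent appearing in the theorem's statement is a typo, as the paper's own appeal to Lemma \ref{recur} confirms.
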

\begin{proof}
Since the prefix $w$ is $\infty$-regular, there exists a positive integer $\bar{h}$ such that $$w^{-h}(12)^{-h-2}$$ is also a prefix for every $h \ge \bar{h}$. Therefore arbitrarily long prefixes of $S$ are repeated, which is enough to have recurrence. In particular, if $k$ is such that $|(12)^{-k-2}|>|w|$, then, by Lemma \ref{lemma1}, $$w^{-k}w$$ is a prefix. Integrating further for $k$ times, and recalling that $w\in \infty$-R, we get the prefix $$w^{-2k}w^{-k}w$$ and continuing the integrations for further $(n-2)k$ times we get \eqref{inftyreg}. Notice that, since $w^{-nk}$ is a prefix for every $n$,  it has to begin with $w^{-hk}$ for every $h<k$.  
\end{proof}
\begin{rem}
Theorem \ref{infty} can be applied to generalized Kolakoski words. Its application to Kolakoski words over binary alphabets $\lbrace m,n \rbrace$ in which $m$ and $n$ are both even or both odd (and therefore every prefix of even length if $\infty$-regular) immediately implies that those sequences are recurrent, which is a well-known result already obtained by other means \cite{sing2010more}.
\end{rem}

As already said, it is a known result that if $S$ is mirror invariant, then it is recurrent \cite{dekking1997long} (notice that Lemmas \ref{recur} and \ref{mirroring} immediately imply that). The converse implication is obtained with an additional hypothesis in the following 
\begin{theorem}
 $\infty$-\emph{R}$\,=\emptyset\implies$ ($S$ is recurrent$\implies S$ is mirror invariant).
 \label{thm1}
 \end{theorem}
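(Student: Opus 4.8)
The plan is to deduce the statement entirely from the two reformulation lemmas already proved. Assume $\infty$-R $=\emptyset$; it then suffices to show that recurrence of $S$ implies mirror invariance. By Lemma \ref{recur}, recurrence gives, for every positive integer $m$, an $m$-regular prefix $p_m$ of $S$; and by Lemma \ref{mirroring}, mirror invariance will follow once we exhibit, for every positive integer $n$, a $k$-normal prefix of $S$ with $k>n$. So the whole argument reduces to converting ``regular of arbitrarily high order'' into ``normal of arbitrarily high order''.

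First I would fix $m$ and consider the $m$-regular prefix $p_m$ supplied by Lemma \ref{recur}. The hypothesis $\infty$-R $=\emptyset$ says exactly that $p_m$ is not $\infty$-regular, so the set $R(p_m):=\{\,h\ge 0:\ p_m\in h\text{-R}\,\}$ is a proper subset of $\mathbb{N}$. By part (1) of Lemma \ref{Syntax} this set is downward closed (since $k\ge h$ implies $k$-R $\subseteq h$-R), and as it contains $m$ but is not all of $\mathbb{N}$ it must be a finite initial segment $\{0,1,\dots,K\}$ with $K\ge m$. In particular $0\in R(p_m)$ automatically, so there are no gaps to worry about. Then $p_m$ is $K$-regular but not $(K+1)$-regular, i.e. $p_m$ is $K$-normal with $K\ge m$.

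Thus for every positive integer $m$ I obtain a $K$-normal prefix of $S$ with $K\ge m$. Given any $n$, choosing $m=n+1$ produces a $K$-normal prefix with $K\ge n+1>n$, which is precisely the condition required by Lemma \ref{mirroring}. Hence $S$ is mirror invariant, completing the implication under the standing hypothesis $\infty$-R $=\emptyset$.

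I do not expect a genuine computational obstacle here: the mathematical content lies in the reduction rather than in any estimate. The one point that needs care is recognizing that the hypothesis $\infty$-R $=\emptyset$ is exactly what upgrades the regular prefixes furnished by recurrence into normal prefixes of unboundedly large order, together with the observation that the regularity set of a prefix is a true initial segment $\{0,\dots,K\}$ (guaranteed by the nesting in Lemma \ref{Syntax}) rather than a set with holes, so that ``$m$-regular and not $\infty$-regular'' forces $K$-normality for a single well-defined $K\ge m$.
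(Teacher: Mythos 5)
Your proof is correct. The point of substance---that the hypothesis $\infty$-R$\,=\emptyset$ converts the arbitrarily regular prefixes supplied by Lemma \ref{recur} into arbitrarily normal ones, because the regularity set of a prefix is downward closed (Lemma \ref{Syntax}, part 1) and a proper subset of $\mathbb{N}$, hence a finite initial segment $\{0,\dots,K\}$ with $K$ at least the regularity order---is exactly the pivot of the paper's proof as well, where it appears as ``there is a least $k$ such that $|w_n^{-k}|$ is odd,'' i.e.\ the $k_n$-regular prefix $w_n$ is in fact $(k-1)$-normal with $k-1\ge k_n$. Where you diverge is in how you finish: you hand the normal prefixes to the sufficiency direction of Lemma \ref{mirroring} and stop, whereas the paper never invokes Lemma \ref{mirroring} in this proof; it re-runs the mirroring construction explicitly, integrating once past the first odd-length integral to exhibit the prefix $w_n^{-k-1}\widetilde{(12)^{-k+1}}\cdots$ of $S$, so that mirrors of the arbitrarily long prefixes $(12)^{-k+1}$ are seen to occur in $S$. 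The mathematical content is the same (the paper's final step is precisely the ``if'' half of Lemma \ref{mirroring} specialized to its situation), but your decomposition is cleaner and makes the theorem a formal corollary of the two reformulation lemmas, while the paper's inlined version has the side benefit of showing explicitly where the mirrored words sit inside $S$.
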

\begin{proof}
Suppose that  $\infty$-R$\,=\emptyset$ and that $S$ is recurrent. Then by Lemma \ref{recur} there is a strictly increasing sequence of positive integers $k_n$ such that $S$ has a $k_n$-regular prefix $w_n$ for every $n$. Since $w_n\notin \infty$-R, there is a positive integer $k$ which is the least integer such that $|w_n^{-k}|$ is odd. As seen in the proof of Lemma \ref{recur}, $w_n^{-k}(12)^{-k+2}$ is also a prefix of $S$, and integrating once more (recalling Lemma \ref{furtherintegration}) we get $$S=w_n^{-k-1}\widetilde{(12)^{-k+1}}\dots$$ Recalling that $(12)^{-k+1}$ is also a prefix of $S$ by Lemma \ref{lemma1}, and that $k$ can be taken arbitrarily large by suitably choosing $w_n$, we can conclude.
\end{proof}

The implication from reversal invariance to recurrence is a known result holding for all elements of $\mathcal{A}^{\omega}$ (for the application to differentiable sequences see \cite{brlek2003note}, where a stronger result is proved, namely that recurrence of $S$ is implied by the existence of arbitrarily long palindromes). The converse implication is proved in the following
\begin{theorem}
$S$ is recurrent $\implies$ $S$ is reversal invariant.
\label{rever}
\end{theorem}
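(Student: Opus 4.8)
The plan is to reduce reversal invariance to a statement about reversed subwords and then to transport the reversal operation up and down the tower of $S$-derivatives and $S$-integrals, whose behaviour under reversal is governed by a single parity rule. Since every finite subword $v$ of $S$ lies inside a prefix $p$ and $\overleftarrow{v}$ is a subword of $\overleftarrow{p}$, reversal invariance is equivalent to: for every subword $v$ of $S$, the word $\overleftarrow{v}$ occurs in $S$. This is the statement I would prove, by induction on $|v|$.

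The key technical ingredient is the interaction between reversal and integration. Reversing a word of length $n$ sends the element of index $i$ to index $n+1-i$, whose parity agrees with that of $i$ when $n$ is odd and is opposite when $n$ is even; since the blocks $1,11,2,22$ produced by \eqref{subs} are palindromes, this yields $\overleftarrow{x^{-1}}=(\overleftarrow{x})^{-1}$ when $|x|$ is odd and $\overleftarrow{x^{-1}}=\widetilde{(\overleftarrow{x})^{-1}}$ when $|x|$ is even. Dually, and this is what lets me descend, reversal commutes with the derivative, $(\overleftarrow{v})'=\overleftarrow{v'}$, because reversing a word reverses its sequence of maximal runs and the trimming of single boundary digits is symmetric; in particular reversal preserves membership in $C^{\infty}$.

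For the induction, the base collects the subwords of length at most two, whose reversals lie among $1,2,11,12,21,22$ and occur in $S$. For the inductive step with $|z|$ large, I replace $z$ by a minimal extension admitting an $S$-derivative (Lemma \ref{findderivativeS}); by the length bounds of Lemma \ref{lengthestimates} and \eqref{derivativelength} its $S$-derivative $z'_S$ is strictly shorter, so by the inductive hypothesis $\overleftarrow{z'_S}$ occurs in $S$. Since $z=(z'_S)^{-1}_S$, Remark \ref{remsint} gives $z=(z'_S)^{-1}$ or $z=\widetilde{(z'_S)^{-1}}$, and applying the reversal rule of the previous paragraph to $\overleftarrow{z}$ shows that, up to a possible mirror, $\overleftarrow{z}$ is the ordinary integral $(\overleftarrow{z'_S})^{-1}$. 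Finally, the $S$-integral of a suitable occurrence of $\overleftarrow{z'_S}$ is again a subword of $S$ (Definition \ref{subsint}), equal by Remark \ref{remsint} to $(\overleftarrow{z'_S})^{-1}$ or to its mirror; the boundary digits trimmed by the $S$-derivative are recovered from the ambient occurrence in $S$, so this subword is $\overleftarrow{z}$ or $\widetilde{\overleftarrow{z}}$.

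The main obstacle is exactly this residual mirror ambiguity: the re-integration step delivers $\overleftarrow{z}$ itself only if the available occurrence of $\overleftarrow{z'_S}$ sits at the correct parity (an even-length preceding prefix when $|z'_S|$ is odd, an odd one when $|z'_S|$ is even). I therefore expect the heart of the argument to be a \emph{phase lemma}: using recurrence, every subword of $S$ occurs at starting positions of both parities, so the required phase is always available. The relevant control comes from $|u^{-1}|=\Sigma u$, which makes the parity of the starting position of an integrated subword equal to the parity of the number of $1$s in the prefix preceding its $S$-derivative; recurrence should force this count to realize both parities among the infinitely many occurrences, and pinning this down rigorously is where the real work lies. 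As a safety net, the ambiguity disappears for free when $\infty$-\emph{R}$=\emptyset$: Theorem \ref{thm1} then yields mirror invariance, and since $C^{\infty}$ is closed under reversal while mirror invariance is equivalent to every $C^{\infty}$ word occurring (Lemma \ref{subwords}), the word $\overleftarrow{v}\in C^{\infty}$ is automatically a subword, so the phase lemma is only genuinely needed in the complementary case $\infty$-\emph{R}$\neq\emptyset$.
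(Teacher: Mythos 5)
Your reduction of reversal invariance to ``$\overleftarrow{v}$ occurs for every subword $v$'', your parity formula relating reversal to integration, and your safety net for the case $\infty$-\emph{R}$\,=\emptyset$ are all sound. But the proposal has a genuine gap exactly where you admit ``the real work lies'': the \emph{phase lemma} is never proved, and it is not a technical loose end --- it is essentially the open mirror-invariance problem in disguise. Indeed, suppose every subword $w$ of $S$ occurred both after an even-length prefix and after an odd-length prefix. By Remark \ref{remsint}, the first kind of occurrence makes $w^{-1}$ a subword of $S$ and the second makes $\widetilde{w^{-1}}$ a subword. Now any subword $z$ extends (Lemma \ref{findderivativeS}) to one admitting an $S$-derivative, and then $z$ equals $(z_S')^{-1}$ or $\widetilde{(z_S')^{-1}}$ as a word; applying the phase lemma to $w=z_S'$ would put both of these words, hence $\widetilde{z}$, inside $S$. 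So the phase lemma implies mirror invariance. Consequently, completing your argument in the case $\infty$-\emph{R}$\,\neq\emptyset$ would, combined with Theorem \ref{thm1}, prove that recurrence implies mirror invariance \emph{unconditionally} --- a statement strictly stronger than the theorem under review, and one the paper itself only obtains under the extra hypothesis $\infty$-\emph{R}$\,=\emptyset$. Nothing in the paper's toolkit (nor in your sketch) shows that recurrence alone forces occurrences at both parities; you have reduced the theorem to an open problem.

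The paper's proof avoids this obstruction entirely: it never needs the same subword at two different phases. From Lemma \ref{recur} it takes a $k$-regular prefix $w_k$, writes $S=v212\dots$ with $v2=w_k^{-2}$, and proves by induction on $h$ that the prefix $(v2)^{-h}1$ of $S$ can be rewritten as $v^{-h}\overleftarrow{(12)^{-h}}$. The mechanism is that $k$-regularity keeps $n:=|(v2)^{-h}|$ even, so for $0\le j\le|(12)^{-h}|$ the $(n-j)$-th element of $(v2)^{-h}$ and the $(j+2)$-th element of $(12)^{-h}$ are equal digits \emph{of equal index parity}; they therefore integrate to identical blocks under \eqref{subs}, and the reversed structure propagates through one more integration. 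Since $(12)^{-h}$ is itself a prefix of $S$ (Lemma \ref{lemma1}) of unbounded length, every reversed subword of $S$ occurs inside some $\overleftarrow{(12)^{-h}}$, and reversal invariance follows. In short, the paper manufactures the reversed word inside a single, correctly aligned occurrence, with the alignment supplied by regularity rather than by a both-parities statement; that alignment device is precisely what your proposal is missing.
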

\begin{proof}
Suppose that $S$ is recurrent. Then, by Lemma \ref{recur}, for every integer $k$ the sequence $S$ has a $k$-regular prefix $w_k$. We have
\begin{equation}
S=w_k^{-2}12\dots
\label{revers}
\end{equation}
where it is easily seen that $w_k^{-2}$ must end with 2. Defining $v$ by $v2=w_k^{-2}$ we can write
\begin{equation}
S=v212\dots
\label{reverz}
\end{equation}
Integrating \eqref{reverz}, and recalling that $v2\in (k-2)$-R, we have
\begin{equation}
S=(v2)^{-1}(12)^{-1}\dots
\label{revenj}
\end{equation}
and as $|v2|$ and $|v212|$ are both even, the 2 appearing as the last element of $v2$ is transformed by the rules \eqref{subs} in the same way as the 2 appearing as the last element of $v212$. Therefore prefix $(v2)^{-1}1$ can be rewritten as
\begin{equation*}
v^{-1}\overleftarrow{(12)^{-1}}
\end{equation*}
Proceeding by induction, suppose that the prefix $(v2)^{-h}1$ can be rewritten as 
\begin{equation}
v^{-h}\overleftarrow{(12)^{-h}}
\label{ll}
\end{equation}
Integrating $h$ times \eqref{reverz}, and recalling that $v2$ is $(k-2)$-regular, we get the prefix 
\begin{equation}
(v2)^{-h}(12)^{-h}
\label{lll}
\end{equation}
Set $n:=|(v2)^{-h}|$. Comparing \eqref{ll} and \eqref{lll}, it follows that, for every integer $j$ such that $0\le j\le |(12)^{-h}|$, the $(n-j)$th element of $(v2)^{-h}$ is equal to and has always the same parity as the $(j+2)$th element of $(12)^{-h}$, and therefore is transformed by the rules \eqref{subs} in the same way. Therefore, since integrating \eqref{lll} we get $(v2)^{-h-1}(12)^{-h-1}$, integrating \eqref{ll} we must get the prefix
\begin{equation}
v^{-h-1}\overleftarrow{(12)^{-h-1}}
\label{llll}
\end{equation}

Since $(12)^{-k}$ is a prefix of $S$ for every $k$ by Lemma \ref{lemma1}, the arbitrariness of $k$, and thus of $h$, allows us to conclude that $S$ is reversal invariant.
\end{proof} 

A natural question is whether, for a subrow $(w,n)$, the property of being $k$-regular for large values of $k$ is compatible with the requirement $w\in C^\infty$. In fact it is possible to prove more, i.e., that $S$ can be eventually written as a concatenation of arbitrarily regular subrows. More precisely, we have the following
\begin{theorem}
For every non-negative integer $n$, there exist a finite word $u_n$ and finite words $w_i$ ($i=1,2\dots$) such that 
\begin{equation}
S=u_nw_1w_2\dots
\label{covering}
\end{equation}
where the subrows $w_i\in k$-R for every $i$ and $k\ge n$. 
\label{cover}
\end{theorem}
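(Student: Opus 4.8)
The plan is to reduce the statement ``$S_{a+1,b}$ is $n$-regular'' to a finite parity condition on the endpoints $a,b$, and then to extract the decomposition by pigeonhole. For a nonnegative integer $p$ write $\lambda_h(p):=\big|(S_{1,p})^{-h}\big|$ for the length of the $h$-th integral of the prefix $S_{1,p}$, so that $\lambda_0(p)=p$; by Lemma \ref{furtherintegration} this integral is again a prefix, namely $S_{1,\lambda_h(p)}$, and since $|w^{-1}|=\Sigma w$ we get $\lambda_1(p)=\Sigma S_{1,p}$ and $\lambda_{h+1}(p)=\lambda_1(\lambda_h(p))$.

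First I would establish, by induction on $h$ via Definition \ref{subsint}, the location of the $S$-integrals of an arbitrary subrow:
\begin{equation}
(S_{a+1,b})_S^{-h}=S_{\lambda_h(a)+1,\ \lambda_h(b)}.
\label{locate}
\end{equation}
The case $h=0$ is immediate; for the inductive step one applies Definition \ref{subsint} to $S_{\lambda_h(a)+1,\lambda_h(b)}$, whose preceding prefix $S_{1,\lambda_h(a)}$ has integral of length $\lambda_{h+1}(a)$ and whose own integral has length $\Sigma S_{\lambda_h(a)+1,\lambda_h(b)}=\lambda_{h+1}(b)-\lambda_{h+1}(a)$, reproducing the endpoints in \eqref{locate}. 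Passing to lengths gives $\big|(S_{a+1,b})_S^{-h}\big|=\lambda_h(b)-\lambda_h(a)$, so $S_{a+1,b}$ is $n$-regular if and only if
\begin{equation}
\lambda_h(a)\equiv\lambda_h(b)\pmod 2\qquad\text{for }h=0,1,\dots,n.
\label{parity}
\end{equation}

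Given \eqref{parity}, the decomposition follows at once. Associate to each position $p$ its parity signature $\big(\lambda_0(p),\lambda_1(p),\dots,\lambda_n(p)\big)\bmod 2\in\{0,1\}^{n+1}$. As the range is finite while the positions are infinite, some signature is realized along an infinite increasing sequence $a_0<a_1<a_2<\cdots$. Putting $u_n:=S_{1,a_0}$ and $w_i:=S_{a_{i-1}+1,a_i}$ yields $S=u_nw_1w_2\cdots$, and by \eqref{parity} every $w_i$ is $n$-regular, i.e. belongs to $k$-R with $k=n\ge n$, which is the claim.

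The one point requiring care, which I would settle before \eqref{locate}, is the bookkeeping of the integral endpoints through Definition \ref{subsint}. In particular Remark \ref{remsint} cautions that for a non-prefix subrow the $S$-integral may equal the mirror of the plain integral rather than the integral itself; but the mirror preserves length, and both \eqref{locate} and \eqref{parity} involve only the lengths $\lambda_h$, so this causes no difficulty. Once this verification is in place the argument is a pure counting argument and I anticipate no further obstacle.
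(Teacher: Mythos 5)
Your proof is correct, but it takes a genuinely different route from the paper's. The paper argues by induction on $n$: given $S=u_nw_1w_2\dots$ with all $w_i$ $k$-regular, it considers the set $M$ of indices where $(k+1)$-regularity fails (those $w_i$ are $k$-normal); if $M$ is finite, the failures are absorbed into $u_{n+1}$, and if $M$ is infinite, the stretch from each $k$-normal subrow through the next one is merged into a single word, which is $(k+1)$-regular by Lemma \ref{Syntax}. Your pigeonhole argument replaces this regrouping induction entirely. Its engine is the location formula $(S_{a+1,b})_S^{-h}=S_{\lambda_h(a)+1,\,\lambda_h(b)}$, proved by iterating Definition \ref{subsint} together with $|w^{-1}|=\Sigma w$ and Lemma \ref{furtherintegration}; this makes explicit that lengths of $S$-integrals are additive over adjacent subrows (the fact underlying items 5 and 6 of Lemma \ref{Syntax}) and reduces $n$-regularity of $S_{a+1,b}$ to the coincidence of the endpoint signatures $(\lambda_0,\dots,\lambda_n)\bmod 2$, after which pigeonhole on the $2^{n+1}$ possible signatures finishes the proof; your handling of the mirror ambiguity of Remark \ref{remsint} (only lengths matter) is also the right observation. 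What your approach buys: it is shorter, it avoids the case distinction (the paper's infinite-$M$ case, with its overlapping words $v_m$, must in fact be read with some care to see that the pieces tile $S$), it yields Remark \ref{rempre} for free since nothing is special about the start of the decomposition, and it makes the link with Lemma \ref{recur} transparent --- recurrence is exactly the statement that the all-zero signature recurs. What the paper's induction buys instead: it refines a given decomposition step by step, and the behaviour of the words $u_n$ along that induction (whether they stay empty or stabilize at a normal word) is precisely the information exploited later in Remark \ref{rem3} and in the constructive procedure of Section \ref{ite}, which your one-shot decomposition does not by itself provide.
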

\begin{proof}
We proceed by induction. Let us suppose that $S$ has the form \eqref{covering} and that $w_i\in k$-R $\forall i$. Let $M$ be the set of positive integers $i_m$ such that $w_{i_m}\notin (k+1)$-R. Clearly the subrows $w_{i_m}$ are $k$-normal. If $M$ is finite, we define $p:=\max\left\lbrace j\in\mathbb{N}^+:j \in M\right\rbrace$ and $u_{n+1}:=u_nw_1\dots w_p$ so that
$$S=u_{n+1}w_{p+1}w_{p+2}\dots\quad (p=1,2\dots)$$
which is the desired result. 

If $M$ is infinite, $i_m$ is a subsequence of $i$, so for every positive integer $m$ we can define the words $v_m:=w_{i_m}w_{(i_m +1)}\dots w_{i_{(m+1)}}$. Every $v_m$ is a concatenation of the $k$-normal subrow $w_{i_m}$, the (possibly empty) word formed by the $i_{(m+1)}-i_m-1$ words $w_{i_m+h}$ ($h=i_m+1\dots i_{m+1}-1$), which are $(k+1)$-regular, and the $k$-normal subrow $w_{i_{(m+1)}}$.
Therefore, by Lemma \ref{Syntax}, $v_m\in(k+1)$-R for every $m$, and therefore defining $p:=\min M$ and the word $u_{n+1}:=u_nw_1w_2\dots w_{p-1}$, we have
\begin{equation}S=u_{n+1} v_1 v_3 \dots v_{2m+1}\dots\quad(m=1,2\dots)
\label{vconm}
\end{equation}
which is the desired result.

Finally, $S$ is obviously written as a concatenation of 0-regular subrows, so the proof is concluded.
\end{proof}
\begin{rem} In the previous Theorem, if we start the inductive construction of the words $w_i$ from $u_0:=\epsilon$ and $w_i:=s_{2i-1}s_{2i}$ ($i=1,2\dots$), we can have two different possibilities:
\begin{enumerate}
\item $u_n=\epsilon$ for every non-negative integer $n$. In this case $S$ is written as a concatenation of $k$-regular subrows for every $k$, and therefore it is recurrent and reversal invariant by Lemmas \ref{recur} and \ref{mirroring}.
\item At some step $\bar{n}$ of the inductive procedure we have $\epsilon\ne u_{\bar{n}}\in (\bar{n}-1)$-N. By Lemma \ref{Syntax} it follows that in this case, continuing the inductive procedure, we have $u_n\in (\bar{n}-1)$-N for every $n>\bar{n}$.
\end{enumerate}
\label{rem3}
\end{rem}
\begin{rem}
We can apply the iterative procedure of Theorem \ref{cover} starting from an arbitrary element of $S$, as no special properties of the beginning of $S$ were used. This means that, for every positive integer $k$, the same conclusion of the Lemma applies to the sequence $s_k s_{k+1}s_{k+2}\dots$ .
\label{rempre}
\end{rem}

To proceed further we need one more definition, as we want to assign a special name to the subrows $v_i$ in \eqref{blocks}. We recall that, for every prefix $w$ of $S$ we have, by Lemma \ref{furtherintegration}, that $w^{-k}$ is also a prefix. 
\begin{defi} For every prefix $w$ such that $|w|>1$, and for every positive integer $k$, we define the $k$-th block generated by the prefix $w$ as the unique subrow  $b_k$ such that $w^{-k+1}b_k=w^{-k}$. \label{blocksdefi}\end{defi}
 We have clearly 
\begin{equation}
S=wb_1b_2b_3\dots
\label{blocksdef}
\end{equation}
and 
\begin{equation}(wb_1\dots b_{k-1})^{-1}=w^{-k}=wb_1\dots b_{k}\label{blocksdef2}\end{equation}
 Notice that, for every $k$, $b_{k+1}=\left(b_k\right)^{-1}_S$ so that, for every $k$: $$b_k=(b_{k-1})^{-1} \quad \text{if}\quad |wb_1\dots b_{k-2}|\quad\text{is even}$$
$$b_k=\widetilde{(b_{k-1})^{-1}} \quad \text{if}\quad |wb_1\dots b_{k-2}|\quad\text{is odd}$$
Since $|w|=|\widetilde{w}|$ for every finite word $w$, we have by Lemma \ref{lengthestimates}, that \begin{equation}\frac{6}{5}|b_{k-1}|\le|b_k|\le \frac{9}{5}|b_{k-1}|\label{bbbbbb}\end{equation}
and therefore
\begin{equation}
\left(\frac{6}{5}\right)^{k-1}|b_1|\le|b_k|\le\left(\frac{9}{5}\right)^{k-1}|b_1|\label{blockslength}
\end{equation}
 A block has the property of being always ``not too small" with respect to whatever comes before it in the sequence, and therefore the asymptotic frequencies of 1s and 2s, if they exist, have to be reached uniformly on the blocks. More precisely, we have the following

\begin{lemma}
Let $w$ be a prefix ($|w|\ge 2$) of $S$ and $b_k$ ($k=1,2\dots$) the blocks generated by $w$. Suppose $f_{\infty}(1)$ exists. Then for every $\epsilon>0$ there is an integer $n$ such that $|f_{b_k}(1)-f_{\infty}(1)|<\epsilon$ for every $k\ge n$. 
\label{uniff}
\end{lemma}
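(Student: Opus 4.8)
The plan is to express the block frequency $f_{b_k}(1)$ as a difference quotient of the counting function of $1$s and then to exploit the geometric growth of the blocks established in \eqref{blockslength} to squeeze this quotient towards $f_\infty(1)$. First I would set $f:=f_\infty(1)$, $a_n:=|S_{1,n}|_1$, $L_0:=|w|$ and $L_k:=|w^{-k}|$ for $k\ge 1$. By \eqref{blocksdef2} the block $b_k$ occupies the positions $L_{k-1}+1,\dots,L_k$ in $S=wb_1b_2\cdots$, so $|b_k|=L_k-L_{k-1}$ and the number of $1$s in $b_k$ equals $a_{L_k}-a_{L_{k-1}}$. This gives
\begin{equation*}
f_{b_k}(1)-f=\frac{(a_{L_k}-fL_k)-(a_{L_{k-1}}-fL_{k-1})}{L_k-L_{k-1}}.
\end{equation*}

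Next, writing $\epsilon_n:=\frac{a_n}{n}-f$, which tends to $0$ by the assumed existence of $f_\infty(1)$, the numerator becomes $\epsilon_{L_k}L_k-\epsilon_{L_{k-1}}L_{k-1}$, so that
\begin{equation*}
|f_{b_k}(1)-f|\le |\epsilon_{L_k}|\,\frac{L_k}{|b_k|}+|\epsilon_{L_{k-1}}|\,\frac{L_{k-1}}{|b_k|}.
\end{equation*}
Because \eqref{blockslength} forces $|b_k|\to\infty$, both $L_k$ and $L_{k-1}$ diverge with $k$, hence $\epsilon_{L_k},\epsilon_{L_{k-1}}\to 0$; so the whole matter reduces to bounding the ratios $L_k/|b_k|$ and $L_{k-1}/|b_k|$ uniformly in $k$.

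This uniform bound is the step I expect to be the main obstacle, and it is exactly the ``a block is never too small compared with what precedes it'' property quoted just before the statement. The way to obtain it is to iterate the lower bound in \eqref{bbbbbb}, which yields $|b_{k-i}|\le (5/6)^i|b_k|$ for $0\le i\le k-1$; summing, $\sum_{j=1}^{k-1}|b_j|=\sum_{i=1}^{k-1}|b_{k-i}|\le |b_k|\sum_{i=1}^{\infty}(5/6)^i=5|b_k|$. Therefore $L_{k-1}=|w|+\sum_{j=1}^{k-1}|b_j|\le |w|+5|b_k|$ and $L_k=L_{k-1}+|b_k|\le |w|+6|b_k|$; since $|b_k|\to\infty$ we have $|w|\le|b_k|$ for all large $k$, so eventually $L_{k-1}/|b_k|\le 6$ and $L_k/|b_k|\le 7$.

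Combining everything, for all large $k$ I obtain $|f_{b_k}(1)-f|\le 7|\epsilon_{L_k}|+6|\epsilon_{L_{k-1}}|$, whose right-hand side tends to $0$ as $k\to\infty$. Given $\epsilon>0$ it then suffices to choose $n$ so large that this bound is $<\epsilon$ for every $k\ge n$, which is the claim. The conceptual point is that convergence of the global density $a_n/n$ does not by itself control the increments over successive windows; it is the geometric separation of the block lengths, special to $S$, that upgrades this Ces\`aro-type convergence into convergence of the per-block frequencies.
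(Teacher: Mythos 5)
Your proof is correct, and it reaches the conclusion by a leaner route than the paper's, although both rest on the same key quantitative ingredient: the geometric growth of the blocks expressed by \eqref{bbbbbb} and \eqref{blockslength}, which makes $|b_k|$ a definite positive fraction of the prefix $u_k=wb_1\dots b_k$ that it terminates. The difference is in the bookkeeping. Writing $L_k=|w^{-k}|$ and $\epsilon_n$ for the deviation of the prefix frequency from $f_\infty(1)$, you rearrange the count of 1s into the difference quotient
\begin{equation*}
f_{b_k}(1)-f_\infty(1)=\frac{L_k\,\epsilon_{L_k}-L_{k-1}\,\epsilon_{L_{k-1}}}{|b_k|},
\end{equation*}
after which only an \emph{upper bound} on the ratios $L_k/|b_k|$ and $L_{k-1}/|b_k|$ is needed, and you obtain it by summing the geometric series coming from \eqref{bbbbbb}. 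The paper instead keeps the weighted-average identity \eqref{keq}, sandwiches the weight $|b_k|/|u_k|$ between two constants $c_1<c_2$, and is then forced to prove the additional fact that $c_2-c_1$ can be taken vanishingly small; that step, equation \eqref{xxxx}, invokes the existence of $f_\infty(1)$ a second time to show that $|b_k|/|u_{k-1}|$ actually \emph{converges}, not merely stays bounded. Your telescoping eliminates this second step entirely, which is a genuine simplification. What the paper's longer route buys is the explicit limit of $|b_k|/|u_{k-1}|$ in terms of $f_\infty(2)$, which is reused in Remark \ref{remblockpref} to compare blocks generated by different prefixes, and that remark is needed later in the proof of the density theorem; your argument, being purely an inequality, does not yield this extra information. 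One caveat, shared equally by the paper: your geometric sum starts at $j=1$, and the paper's bound $\rho_i\ge \frac{6}{5}$ is asserted for every $i$, whereas Lemma \ref{lengthestimates} guarantees \eqref{bbbbbb} only once $|b_{k-1}|\ge 3$; in both proofs this is harmless, since the finitely many short initial blocks contribute a bounded amount that is absorbed for large $k$ because $|b_k|\to\infty$.
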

\begin{proof}
For every positive integer $k$, let us define the prefixes $u_k:=wb_1\dots b_k$. Using \eqref{blockslength}, it can be shown that there exist two real numbers $c_1$ and $c_2$, with $0<c_1<c_2<1$ such that, for every $k$ large enough,
\begin{equation}|b_k| \ge c_1 |u_k|\qquad\text{and}\qquad
|b_k| \le c_2 |u_k|
\label{c_1c_2}
\end{equation}
Indeed, setting $\rho_k:= \frac{|b_{k+1}|}{|b_{k}|}$, \eqref{bbbbbb} implies that the ratio $\frac{|u_k|}{|b_k|}$ is bounded from above as follows: 

\begin{equation*}
\frac{|u_k|}{|b_k|}\le\frac{|w|+|b_1|\left(1+\sum_{j=1}^{k-1}\prod_{i=1}^{j}\rho_i\right)}{|b_1|\prod_{i=1}^{k-1}\rho_i}
\end{equation*}
so that, noticing that  by Lemma \ref{lengthestimates} we have $|w^{-1}|=|w|+|b_1|\le 6|b_1|$, it follows
\begin{equation*}
\frac{|u_k|}{|b_k|}\le 1+ \frac{6}{\prod_{i=1}^{k-1}\rho_i}+\sum_{j=2}^{k-1}\left(\prod_{i=j}^{k-1}\rho_i\right)^{-1}
\end{equation*}
Since $\rho_i\ge \frac{6}{5}$ for every positive integer $i$, it follows 
\begin{equation*}
\frac{|u_k|}{|b_k|}\le 1+6\left( \frac{5}{6} \right)^{k-1}+\sum_{i=1}^{k-2}\left(\frac{5}{6} \right)^{i} \xrightarrow[k \to \infty]{} 6
\end{equation*}
Therefore, $c_1$ is bounded from below by $\frac{1}{6}$ and in particular is bounded away from zero (it is proved similarly that $c_2$ is bounded away from 1).

We have:
\begin{equation}
f_{u_k}(1)=f_{u_{k-1}}(1) \frac{|u_{k-1}|}{|u_k|}+f_{b_k}(1)\frac{|b_k|}{|u_k|}\ge f_{u_{k-1}}(1) (1-c_2)+f_{b_k}(1)c_1
\label{keq}
\end{equation}
Since $u_k$ is a prefix for every $k$ and by \eqref{integrallength} $|u_k|\rightarrow \infty$ when $k\rightarrow\infty$, if there exists $f_{\infty}(1)$ then for every $\epsilon>0$ there is $h$ so large that, for every $k>h$,
$$f_{u_{k-1}}(1)=f_{\infty}(1)+{\epsilon}_1$$
and
$$f_{u_{k}}(1)=f_{\infty}(1)+{\epsilon}_2$$
with $\max\left\lbrace\epsilon_1,\epsilon_2\right\rbrace<\epsilon$. Therefore, from \eqref{keq} we have
\begin{equation*}
f_{b_k}(1)c_1\le f_{u_k}(1)-f_{u_{k-1}}(1) (1-c_2)=\epsilon_2-\epsilon_1+f_{\infty}(1)c_2+\epsilon_1 c_2
\end{equation*}
whence
\begin{equation*}
f_{b_k}(1)c_1-f_{\infty}(1)c_2\le \epsilon_2+\epsilon_1(1-c_2)
\end{equation*}
so that
\begin{equation}
\left( f_{b_k}(1)-f_{\infty}(1) \right)c_1\le \epsilon_2+\epsilon_1(1-c_2)+f_{\infty}(1)(c_2-c_1)
\label{final}
\end{equation}
If there exists $f_{\infty}(1)$ the difference $c_2-c_1$ becomes arbitrarily small when $k$ diverges. Indeed:
\begin{equation}
\frac{|b_k|}{|u_{k-1}|}=f_{u_{k-1}}(1)+2\left(1-f_{u_{k-1}}(1)\right)
\label{xxxx}
\end{equation}
The right hand side of \eqref{xxxx} tends to $2-f_{\infty}(1)=1+f_{\infty}(2)$ when $k\rightarrow \infty$. Therefore also $\frac{|b_k|}{|u_k|}$ converges to a limit when $k$ diverges, as it is obviously $|u_k|=|u_{k-1}|+|b_k|$. Therefore, if $k$ is large enough, we can take $c_1$ and $c_2$ such that $c_2-c_1$ is arbitrarily small, and since $c_1$ is bounded away from 0, \eqref{final} implies that $|f_{b_k}(1)-f_{\infty}(1)|$ is vanishingly small when $k$ diverges.
\end{proof}
\begin{rem}
Let us take another prefix $v$ with $|v|>|w|$ and let $d_k$ denote the blocks generated by $v$. Then instead of \eqref{xxxx} we have
\begin{equation}
\frac{|d_k|}{|p_{k-1}|}=f_{p_{k-1}}(1)+2\left(1-f_{p_{k-1}}(1)\right)
\label{xxxxx}
\end{equation}
where $p_k:=vd_1\dots d_k$. Clearly the right hand side of \eqref{xxxxx} converges to $1+f_{\infty}(2)$ faster than the right hand side of \eqref{xxxx}, as for every $k$ we have $|p_k|>|u_k|$. From this it easily follows that for every $\epsilon>0$, if $n$ satisfies Lemma \ref{uniff} for a given prefix $w$, then it satisfies Lemma \ref{uniff} for $v$, i.e. $|f_{b_k}(1)-f_{\infty}(1)|<\epsilon$ for every $k\ge n$ implies $|f_{d_k}(1)-f_{\infty}(1)|<\epsilon$ for every $k\ge n$. 
\label{remblockpref}  
\end{rem}

\begin{defi}
We say that a prefix is $k$-\emph{minimal} if it is the shortest $k$-normal prefix of $S$.
\end{defi}
 Clearly for every positive integer $k$ there is at most one $k$-minimal prefix.

We want now to provide sufficient conditions (as weak as possible) implying that Keane's conjecture is true. Specifically, we require the existence of arbitrarily normal prefixes as well as a ``relaxed uniformity" property, i.e. that a sufficiently large portion of every block belonging to a certain subset (the ones generated by $k$-minimal prefixes) is representative of the frequency of 1s on that block. 

More precisely, we have the following

\begin{theorem}
Suppose that
\begin{enumerate}
\item There exists $f_{\infty}(1)$; 
\item There is a strictly increasing sequence of positive integers $$K:=k_1,k_2,k_3\dots$$ such that there exists a $k_n$-normal prefix $p_{k_n}$ of $S$ for every $n$;
\item for every $\epsilon>0$, there is a positive integer $L_\epsilon$ such that $|f_{b_{m}^n}(1)-f_{c_{m}^n}(1)|<\epsilon$ 

for every positive integer $m$ such that $|b_m^n|>L_\epsilon$, where
\begin{itemize}
\item $b_{m}^n$ ($m=1,2,\dots$) are the blocks generated by the $k_n$-minimal prefixes $p_{k_n}$,
\item $c_{m}^n$ is a prefix of the block $b_{m}^n$ such that $|c_{m}^n|\ge L_\epsilon$. 
\end{itemize}

\vspace{0.2cm}

Then $f_{\infty}(1)=\frac{1}{2}$.
\end{enumerate}

\end{theorem}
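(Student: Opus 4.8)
The plan is to exploit the one place in the structure of $S$ where a \emph{mirror} is forced to appear --- namely the junction produced in the proof of Lemma \ref{mirroring} --- and to feed it into the relaxed uniformity of hypothesis 3, so that one and the same word is simultaneously forced to have density $d$ and density $1-d$.

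Write $d:=f_\infty(1)$, which exists by hypothesis 1; the goal is $d=\tfrac12$. Fix $n$ and abbreviate $p:=p_{k_n}$, $k:=k_n$. Since $p$ is $k$-normal, the construction carried out in the proof of Lemma \ref{mirroring} shows that
\begin{equation*}
p^{-(k+2)}\,\widetilde{(12)^{-k}}
\end{equation*}
is a prefix of $S$. Set $V_n:=(12)^{-k}$, which is itself a prefix of $S$ by Lemma \ref{lemma1}, so $|V_n|\to\infty$ by \eqref{integrallength}. The word $\widetilde{V_n}$ begins exactly at position $|p^{-(k+2)}|+1$, i.e. exactly where the block $b^n_{k+3}$ generated by $p$ begins, since $p^{-(k+2)}=p\,b^n_1\cdots b^n_{k+2}$; note also that $|b^n_{k+3}|\to\infty$ by \eqref{blockslength}, the block index $k+3$ being divergent.

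Next I would extract a controlled subword. Let $\ell_n:=\min\{|V_n|,|b^n_{k+3}|\}$ and let $c_n$ be the prefix of $\widetilde{V_n}$ of length $\ell_n$. Then $c_n$ is a prefix of the block $b^n_{k+3}$, and since the mirror operation commutes with taking prefixes, $c_n=\widetilde{q_n}$ where $q_n$ is the length-$\ell_n$ prefix of $V_n$ and hence a prefix of $S$. Because $\ell_n\to\infty$, three facts hold as $n\to\infty$: (i) $q_n$ is an arbitrarily long prefix of $S$, so $f_{q_n}(1)\to d$ and therefore $f_{c_n}(1)=1-f_{q_n}(1)\to 1-d$; (ii) $f_{b^n_{k+3}}(1)\to d$; (iii) by hypothesis 3, applied to the block $b^n_{k+3}$ and its prefix $c_n$ (legitimate once $\ell_n\ge L_\epsilon$ and $|b^n_{k+3}|>L_\epsilon$, which holds for all large $n$), $|f_{c_n}(1)-f_{b^n_{k+3}}(1)|\to 0$. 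Chaining (i)--(iii) gives $1-d=d$, i.e. $d=\tfrac12$.

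The one step requiring genuine care is (ii): the generating prefix $p_{k_n}$ \emph{changes with} $n$, so one cannot simply quote Lemma \ref{uniff}, which fixes the prefix and lets the block index diverge. Instead I would fix a single short reference prefix $w_0$ with $|w_0|\ge 2$, obtain from Lemma \ref{uniff} a threshold $N(\epsilon)$ beyond which the blocks of $w_0$ have density within $\epsilon$ of $d$, and then use Remark \ref{remblockpref} to transfer the \emph{same} threshold to every longer prefix $p_{k_n}$; since the relevant index $k_n+3\ge N(\epsilon)$ for all large $n$, this yields $f_{b^n_{k+3}}(1)\to d$. Handling this uniformity across the two coupled indices --- the normality order $k_n$ and the (divergent) length of $p_{k_n}$ --- is the main obstacle; the mirror-induced density flip and the bookkeeping of the $\epsilon$'s are then routine.
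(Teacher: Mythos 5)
Your proposal is correct and takes essentially the same route as the paper: both arguments exploit the $k_n$-normality of the minimal prefix to force a mirrored copy of a prefix of $S$ at the start of the block of index $k_n+3$ (yours a prefix of $\widetilde{(12)^{-k_n}}$, the paper's $\widetilde{w^{-2}}$ for a long prefix $w$ of $(12)^{-k_n+2}$), and then combine hypothesis 3 with Lemma \ref{uniff} and Remark \ref{remblockpref} to conclude $f_{\infty}(1)=1-f_{\infty}(1)$. The remaining differences (your $\min$-length truncation and transfer of a fixed threshold from a reference prefix, versus the paper's two-case selection of a suitable minimal prefix $q$) are purely organizational.
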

\begin{proof}

Take $\epsilon>0$ and consider a prefix $w$ so long that $|w|\ge L_\epsilon$ and \begin{equation}
|f_v(1)-f_{\infty}(1)|<\epsilon
\label{preff}
\end{equation} 
for every prefix $v$ such that $|v|\ge|w|$. 
We can find a $k_n$-minimal prefix $p$ with $k_n$ being the smallest element of the sequence $K$ such that $w$ is a prefix of $(12)^{-k_n+2}$, which implies that $p^{-k_n}w$ is a prefix of $S$. By Lemma \ref{uniff} there is a positive integer $m$ such that the $m$-th element $k_m$ of the sequence $K$ has the property that $|f_{b_j}(1)-f_{\infty}(1)|<\epsilon$ for every $j\ge k_m$, where $b_j$ are the blocks generated by $p$. There are two possibilities:
\begin{enumerate}
\item $k_m>k_n$.

 Then by hypothesis 2. we can find a $k_s$-minimal prefix $q$ where $k_s$ is the smallest element of the sequence $K$ such that $k_s\ge k_m$ and $|q|>|p|$ (of course the latter is verified for some $k_s$ because there are only finitely many prefixes which are shorter than $p$). Clearly $q^{-k_s}w$
is also a prefix of $S$. Moreover, recalling Remark \ref{remblockpref} and denoting by $d_j$ the blocks generated by $q$, we have 
\begin{equation}|f_{d_j}(1)-f_{\infty}(1)|<\epsilon 
\label{d_k}
\end{equation}
for every $j\ge k_m$ and thus in particular for $j\ge k_s$. We can assume that $k_s\ge 2$, so that $|q|\ge 16$, $|d_1|\ge 8$ and thus 
\begin{equation}
q^{-k_s}d_{k_s+1}=q^{-k_s}wu
\label{eqx}
\end{equation}
with $u$ nonempty.

Since $|q^{-k_s-1}|$ is odd by hypothesis, integrating two more times \eqref{eqx} we get the prefix $$q^{-k_s-2}d_{k_s+3}=q^{-k_s-2}\widetilde{w^{-2}}{u_S^{-2}}$$ where $u_S^{-2}$ is nonempty, so that $\widetilde{w^{-2}}$ is a prefix of $d_{k_s+3}$ (and does not coincide with it).
Since $|w^{-2}|>|w|\ge L_\epsilon$, by assumption the following inequalities are verified:
\begin{equation}
|f_{d_{k_s+3}}(1)-f_{\infty}(1)|<\epsilon
\label{comparings}
\end{equation}
\begin{equation}
|f_{w^{-2}}(1)-f_{\infty}(1)|<\epsilon
\label{comparing}
\end{equation}
%
 
and moreover, by hypothesis 3., $$|f_{\widetilde{w^{-2}}}(1)-f_{d_{k_n+3}}(1)|<\epsilon$$ Combining the last inequality with \eqref{comparings} and \eqref{comparing}, we get that the difference $|f_{\widetilde{w^{-2}}}(1)-f_{w^{-2}}(1)|$ is vanishingly small if $n$ is large enough, and therefore so is, by definition of mirror word, the difference $|f_{w^{-2}}(1)-f_{w^{-2}}(2)|$, from which    we can conclude.

\item $k_n\ge k_m$.

Then we proceed as above with the prefix $p$ instead of $q$ and $k_n$ instead of $k_m$.
\end{enumerate}
\end{proof}
\section{An iterative procedure providing arbitrarily long recurrent subwords}
\label{ite}
In this section we want to use the iterative construction shown in the proof of Theorem \ref{cover} to establish constructively the existence of recurrent subwords of arbitrary length and identify places where they must appear in the structure of $S$. Before this, let us expicitly recall that, for any aperiodic sequence and every positive integer $n$, the existence of at least $n+1$ distinct subwords of length $n$ (which are easily shown to be recurrent) is a basic combinatorial result (see for instance \cite{lothaire2002algebraic}).
However, being so general, this kind of argument is of course non-constructive, not providing any insight about where to find such recurrent subwords in the sequence, nor about the structure of such subwords themselves.

In order to obtain a bit more, let us start by associating to every subrow of $S$ a sequence over $\left\lbrace 0,1\right\rbrace^{\omega}$ describing the parity of all its $S$-integrals. More precisely, we introduce the following definition:

\begin{defi}
For every subrow $w$ we define $P_0(w)=0$ if $|w|$ is even, $P_0(w)=1$ otherwise. We define inductively $P_n(w)=0$ if $|w^{-n+1}_S|$ is even, $P_n(w)=1$ otherwise. We call the sequence $P_n(w)$ the \emph{history of parity} of the integrals of $w$.
\end{defi}
 In the particular case in which $w$ is a prefix, $P_n(w)$ is simply the sequence describing the parities of $|w^{-n}|$. 

\begin{lemma}
Suppose that $u_1$ and $u_2$ are distinct prefixes of $S$ such that $S=u_1w_1\dots$ and $S=u_2w_2\dots$ with the subrows $w_1$ and $w_2$ coinciding as subwords. If there exists $\bar{n}$ such that $P_{\bar{n}}(u_1)\ne P_{\bar{n}}(u_2)$, then $(w_1)_S^{-k}\ne (w_2)_S^{-k}$ for every $k\ge \bar{n}$.  
\label{noname}
\end{lemma}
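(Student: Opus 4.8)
The plan is to follow the two subrows $(w_1)_S^{-k}$ and $(w_2)_S^{-k}$ as abstract words and to show that the differing parity both forces them apart and keeps them apart. First I would record how a single $S$-integration acts at the word level. By Definition \ref{subsint} and Lemma \ref{furtherintegration}, an easy induction shows that the prefix of $S$ immediately preceding $(w_i)_S^{-k}$ is exactly $u_i^{-k}$; hence, by Remark \ref{remsint}, the passage from $(w_i)_S^{-(k-1)}$ to $(w_i)_S^{-k}$ consists, as a word, in applying $v\mapsto v^{-1}$ when the preceding prefix $u_i^{-(k-1)}$ has even length and $v\mapsto\widetilde{v^{-1}}$ when it has odd length. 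The branch chosen at the $k$-th step is therefore dictated precisely by the parity recorded in $P_k(u_i)$.

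Next I would isolate the two elementary facts that do all the work. First, by the substitution rules \eqref{subs} the initial letter of any word occupies an odd position, so $v^{-1}$ always begins with $1$ whereas $\widetilde{v^{-1}}$ always begins with $2$; consequently, whenever the two parities differ at a given level the corresponding integrals begin with different digits and are thus distinct as words. Crucially, this conclusion requires no relation whatsoever between the two words being integrated. Second, both maps $v\mapsto v^{-1}$ and $v\mapsto\widetilde{v^{-1}}$ are injective: the former because the run-length reading (the derivative) recovers $v$, the latter because it is the former one followed by the mirror bijection $v\mapsto\widetilde v$.

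With these in hand the argument is a two-case induction on $k$. For the base case $k=\bar n$: since $w_1=w_2$ but $P_{\bar n}(u_1)\ne P_{\bar n}(u_2)$, the $\bar n$-th integration applies the two different rules to the two subrows, so by the first fact the words $(w_1)_S^{-\bar n}$ and $(w_2)_S^{-\bar n}$ begin with different digits and hence differ. For the inductive step, assume $(w_1)_S^{-k}\ne(w_2)_S^{-k}$. If $P_{k+1}(u_1)=P_{k+1}(u_2)$ then one and the same injective map is applied to both, and the second fact preserves the inequality; if instead $P_{k+1}(u_1)\ne P_{k+1}(u_2)$ then the first fact makes the two images begin with different digits. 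In either case $(w_1)_S^{-(k+1)}\ne(w_2)_S^{-(k+1)}$, so the inequality propagates and holds for every $k\ge\bar n$.

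The only genuinely delicate part is the bookkeeping of the first paragraph: verifying that a single $S$-integration is, at the word level, exactly $(\cdot)^{-1}$ or $\widetilde{(\cdot)^{-1}}$, with the branch selected by the parity of the preceding prefix $u_i^{-(k-1)}$, and that this parity is the very datum stored in $P_k(u_i)$. Everything downstream rests on the observation that the leading digit of an integral is fixed by the parity of its starting index, which is exactly what frees the induction from any hypothesis tying the two subrows together at earlier levels. One should read $\bar n\ge 1$ here, since at level $0$ the two subrows coincide; this is harmless, because a parity discrepancy at level $0$ already entails one at some level $\ge 1$.
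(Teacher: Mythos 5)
Your proof is correct and takes essentially the same route as the paper's: the paper's closing observation---that for nonempty $w\ne v$ the four words $w^{-1}$, $\widetilde{w^{-1}}$, $v^{-1}$, $\widetilde{v^{-1}}$ are pairwise distinct---packages exactly your two facts (an integral starts with $1$, a mirrored integral with $2$, and both branch maps are injective), and the paper propagates distinctness through successive integrations just as your induction does. The only cosmetic difference is that the paper first reduces to the least $\bar n$ with a parity discrepancy, so that the two subrows are exact mirror images at level $\bar n$, whereas you handle the base case directly by the first-digit argument and note explicitly the reading $\bar n\ge 1$.
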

\begin{proof}
Suppose in particular that $\bar{n}$ is the least integer for which $P_n(u_1)\ne P_n(u_2)$. Then it follows that $(w_1)^{-\bar{n}+1}_S=(w_2)^{-\bar{n}+1}$, while, recalling the substitution rules \eqref{subs}, we have $(w_1)^{-\bar{n}}_S= \widetilde{(w_2)^{-\bar{n}}_S}\ne (w_2)^{-\bar{n}}_S$. To conclude it is enough to observe that if $w$ and $v$ are nonempty subwords, $w\ne v$ implies that the four words $w^{-1}$, $\widetilde{w^{-1}}$, $v^{-1}$ and $\widetilde{v^{-1}}$ are all distinct.
\end{proof}
 Let us now start the inductive procedure described in the proof of Theorem \ref{cover} with the empty prefix $u_0=\epsilon$ and $w_i:=s_{2i-1}s_{2i}$ ($i=1,2\dots$). Suppose that $\bar{n}$ is the least integer for which $u_{\bar{n}}\ne \epsilon$ (we recall that, by Lemma \ref{recur}, if $u_n=\epsilon$ for every positive integer $n$, then every subword of $S$ is recurrent). It follows from the construction of Theorem \ref{cover} that it has to be $u_{\bar{n}+k}\in\bar{n}\text{-}N$ for every positive integer $k$. By direct inspection it can be seen that $\bar{n}$ is not smaller than 2, as the prefix $s_1s_2\dots s_{16} = 12 21 12 12 21 22 11 21$ is 2-regular. Therefore, according to Theorem \ref{cover}, we have that, for every positive integer $k$, 
\begin{equation}
S=u_k w_1w_2\dots
\label{111}
\end{equation} 
 with $u_k\in h$-N ($h\ge 2$) and $w_i\in k$-R for every $i$. Since $u_k$ is 1-regular, we have that, writing $S$ as
\begin{equation}
S=u_{k}^{-2} (w_1)_S^{-2} (w_2)_S^{-2}\dots 
\label{eqqqq}
\end{equation}
the subrows $(w_i)_S^{-2}$ all begin with 12. Therefore integrating $k-2$ times \eqref{eqqqq} and suitably defining the subrows $v_i$, we obtain for $S$ the structure
\begin{equation*}
S=v_0(z_1)_S^{-k+2}v_1(z_2)_S^{-k+2}\dots
\end{equation*}
were $z_i=12$ for every integer $i$, and the subrows $(z_i)_S^{-k+2}$ are all coinciding as subwords since, for every $j\le k$, the parity of $|(u_kw_1w_2\dots w_n)^{-j}|$ is the same for every $n>0$. Finally, since $k$ is arbitrarily large by Theorem \ref{cover}, \eqref{integrallength} implies that the recurrent subwords $(z_i)_S^{-k+2}$ are arbitrarily long.

We can also use the same iterative procedure to identify other arbitrarily long recurrent subwords which, in general, are not coinciding with the previous ones. Indeed, recalling Remark \ref{rempre}, we can also apply the iterative construction starting right after any given prefix of $S$. Taking, for instance, the 1-normal prefix $p:=$1221, for every positive integer $k$ we can write
\begin{equation*}
S=p\bar{u}_k \bar{w}_1 \bar{w}_2\dots  
\end{equation*}
 where, noticing that $s_5\dots s_8\in 2$-R, we have $\bar{u}_k\in \bar{h}$-N ($\bar{h}\ge 2$) and $\bar{w}_i\in k$-R for every $i$. Since $p\bar{u}_k$ is 1-regular by Lemma \ref{Syntax}, we have that, writing $S$ as
\begin{equation}
S=(p\bar{u}_{k})^{-2} (\bar{w}_1)_S^{-2} (\bar{w}_2)_S^{-2}\dots 
\label{eqqq2}
\end{equation}
the subrows $(\bar{w}_i)_S^{-2}$ all begin with 12. Therefore, integrating \eqref{eqqq2} $k-2$ times and suitably defining the subrows $\bar{v}_i$, we obtain for $S$ the structure
\begin{equation*}
S=\bar{v}_0(\bar{z}_1)_S^{-k+2}\bar{v}_1 (\bar{z}_2)_S^{-k+2}\dots
\end{equation*} 
were $\bar{z}_i=12$ for every integer $i$, and the subrows $(\bar{z}_i)_S^{-k+2}$ are all coinciding as subwords since, for every $j\le k$, the parity of $|(p\bar{u}_k\bar{w}_1\bar{w}_2\dots \bar{w}_n)^{-j}|$ is the same for every $n>0$.
Since by construction $P_n(u_k)\ne P_n(p\bar{u}_k)$, Lemma \ref{noname} ensures that $(z_i)_S^{-k+2}\ne (\bar{z}_i)_S^{-k+2}$, while again since $k$ can be arbitrarily large (from Theorem \ref{cover}), the recurrent subwords $(\bar{z}_i)_S^{-k+2}$ have arbitrarily large length by \eqref{integrallength}.

\section{More open questions}
\label{more}
 How do $k$-regular prefixes (or, in general, subrows) look? This is a difficult question. Let us take a look at the first cases. A prefix $w=s_1\dots s_n$ is
\begin{itemize}
\item 0-regular if $n$ is even;
\item 1-regular if it is 0-regular and $\Sigma w$ is even;
\item 2-regular if it is 1-regular and $\Sigma_{i=0}^{\frac{n}{2}-1} s_{2i+1}$ is even;
\item 3-regular if it is 2-regular and 
\begin{align*}
&\Big|\left\lbrace s_j: s_j=2\,\text{and}\,\, j\,\, \text{is odd} \right\rbrace\Big| \quad \qquad+ \\
&\Big|\left\lbrace s_j : s_j=1,\,\, j\,\, \text{is odd} \,\,\text{and}\,\, \Sigma_{i=1}^{j-1} s_i \,\,\text{is even}\right\rbrace\Big|
\end{align*}
is even.
\end{itemize}
With some effort one can go a bit further, but it is not easy to see where the thing is going.

From numerical computations one gets the impression that the requirement of being $k$-regular for large $k$ is quite hard to meet - and we recall that recurrence for $S$ is equivalent to the existence of \emph{arbitrarily} regular prefixes. For instance, the shortest 10-regular prefix has length 6410, while the 10-minimal prefix has length 7144. Since the number of independent conditions that a finite word has to satisfy to be $k$-regular seems to increase with $k$, the following conjecture arises naturally.
\begin{conj}
There are no $\infty-$regular subrows in $\mathcal{SR}(S)$.
\label{conj1}
\end{conj}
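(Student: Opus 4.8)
Since this is flagged as a \emph{conjecture} and not settled in the paper, I can only propose a line of attack and indicate where it stalls; I do not expect a complete proof to fall out of the tools developed here. The plan is to argue by contradiction and to concentrate first on the prefix case, where the hypothesis bites hardest (the subrow case is genuinely harder, see the last paragraph). So suppose $w$ is an $\infty$-regular prefix. Writing $S=wb_1b_2\cdots$ with the blocks $b_k$ of Definition \ref{blocksdefi} and using $w^{-k}=wb_1\cdots b_k$, the hypothesis forces every partial length $|wb_1\cdots b_k|=|w^{-k}|$ to be even; subtracting consecutive partial sums shows that \emph{every} block $b_k$ has even length, and that $b_{k+1}$ begins at the odd position $|w^{-k}|+1$. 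Hence the alternating substitution \eqref{subs} aligns with the block decomposition at every scale: within each block the global index parities coincide with the intrinsic ones, so $b_{k+1}=(b_k)^{-1}_S=(b_k)^{-1}$ is computed without mirror, and the block sequence evolves by the \emph{position-independent} (non-alternating) map $b\mapsto b^{-1}$.

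By Theorem \ref{infty} this aligned, parity-free evolution upgrades to the rigid tiling $w^{-nk}\cdots w^{-k}w$ of arbitrarily long prefixes, exhibiting $S$ (past its finite head) as a self-similar inflation fixed point under $v\mapsto v^{-k}$ with no parity shifts. The intended contradiction is that such an inflation structure makes $S$ describable without the alternation that is essential to its definition, in tension with the known impossibility of generating $S$ by a \emph{simple} substitution \cite{culik1992alternating}; alternatively, a primitive inflation pins the density of $1$s to the ratio read off a Perron eigenvector, which one would then check to be incompatible with the sharp bounds of \cite{chvatal1994notes}. I should stress that converting the self-similar tiling into a bona fide contradiction is itself nontrivial and is part of the difficulty: the identity $S=S^{-1}$ holds unconditionally (alternatingly), and the whole force of $\infty$-regularity is to remove the alternation, so the delicate point is to certify that the resulting description really is ``simple'' in the precise sense ruled out by the cited results.

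A second, more intrinsic route, closer to the paper's philosophy, is to track the parity history $P_k(w)$ directly. Set $a_k:=|w^{-k}|\bmod 2$ and $b_k:=|w^{-k}|_2\bmod 2$. From $|w^{-(k+1)}|=\Sigma(w^{-k})=|w^{-k}|+|w^{-k}|_2$ one gets $a_{k+1}\equiv a_k+b_k\pmod 2$, so $\infty$-regularity ($a_k=0$ for all $k$) forces $b_k=0$ for all $k$ as well. The obstruction is that $b_{k+1}$ is \emph{not} a function of $(a_k,b_k)$: inspecting \eqref{subs}, $b_{k+1}$ equals the parity of the number of even-indexed $1$s in $w^{-k}$, a strictly new functional. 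Iterating, $P_k(w)$ is governed by an ever-growing tower of independent parity functionals, which is exactly the phenomenon made visible by the explicit $0,1,2,3$-regularity conditions tabulated in Section \ref{more}. The plan here would be to prove that this tower never closes and that the functionals are independent enough that the identically-zero history is unattainable for a word occurring in $S$.

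The step I expect to be the genuine obstacle is precisely this last one: $\infty$-regularity is the simultaneous satisfaction of infinitely many parity constraints, but $S$ is a single deterministic sequence, so nothing a priori forbids an infinite coincidence, and the ``independence of the conditions'' suggested by the low-order computations is only heuristic. Ruling out the coincidence seems to require a genuinely new ingredient — for instance a transfer-operator or automatic-sequence description showing that the parity history $P_\bullet(w)$ is itself never eventually zero — rather than the length estimates of Lemma \ref{lengthestimates} and the syntactic rules of Lemma \ref{Syntax} used elsewhere. Finally, the extension to arbitrary subrows is strictly harder: an $\infty$-regular subrow sits at the positions $|u^{-k}|+1$ determined by its preceding prefix $u$, and these need not be odd unless $u$ is itself $\infty$-regular, so the block-alignment of the first step is unavailable and the whole argument would have to be relocalized along the $S$-integral occurrences.
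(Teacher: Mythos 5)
This statement is Conjecture \ref{conj1} of the paper: it is \emph{open}, the paper offers no proof of it (only the heuristic that the number of independent parity conditions defining $k$-regularity appears to grow with $k$, plus the remark that it would follow from the stronger Conjecture \ref{conj2}), and your proposal, as you candidly say, does not prove it either. So there is no paper proof to compare against; what can be assessed is whether your partial deductions are sound and whether the gaps you flag are the real ones. Your first-step deductions are correct: if $w$ is an $\infty$-regular prefix, then every block $b_k$ of Definition \ref{blocksdefi} has even length and begins at an odd position, so $b_{k+1}=(b_k)^{-1}$ with no mirroring; and in your second route, from $|w^{-(k+1)}|=|w^{-k}|+|w^{-k}|_2$ the vanishing of all length parities does force the parity of $|w^{-k}|_2$ to vanish for every $k$, while the next constraint is the parity of the number of even-indexed $1$s in $w^{-k}$, a genuinely new functional. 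That second route is essentially the paper's own motivation for the conjecture made explicit (the $0,1,2,3$-regularity conditions listed in Section \ref{more}); asserting that this tower of functionals ``never closes'' is not a step toward the conjecture, it \emph{is} the conjecture.

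The concrete obstructions are where you suspect, and they are worth naming precisely. In route 1, the tiling $w^{-nk}\cdots w^{-k}w$ of Theorem \ref{infty} is not a fixed point of a morphism: the map $v\mapsto v^{-k}$ is not a morphism of $\mathcal{A}^*$, and even with all parities aligned, integration acts as a morphism only on the pair alphabet $\lbrace 11,12,21,22\rbrace$ with images of lengths $2,3,3,4$ that must be re-blocked at every step; so the negative result of \cite{culik1992alternating} does not apply to the structure you have, and there is no specific primitive substitution whose Perron eigenvector could be computed and tested against Chv\'atal's window $0.5\pm 0.00084$ (which is in any case far too wide to exclude an unspecified algebraic ratio). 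Worse, every consequence the paper derives from the existence of an $\infty$-regular prefix --- recurrence (Theorem \ref{infty}), hence reversal invariance (Theorem \ref{rever}), and the self-similar prefix structure \eqref{inftyreg} --- is a statement that is either conjectured to be true or not known to be false, so no contradiction can be extracted from these consequences with current knowledge. The only formal content the paper attaches to this conjecture runs in the opposite direction of what a proof needs: Conjecture \ref{conj2} (equal parity histories force equal subrows) implies Conjecture \ref{conj1}, by splitting a hypothetical $\infty$-regular subrow into two subrows with identical history; that is a reduction to a \emph{harder} open statement, which is consistent with your conclusion that a genuinely new ingredient, beyond Lemmas \ref{Syntax} and \ref{lengthestimates}, is required. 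Your final remark about general subrows is also correct: for $S=uw\dots$ the positions of the $S$-integrals of $w$ are governed by the parity history of $u$, so even the block-alignment step of the prefix case is unavailable there.
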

A consequence of this conjecture is seen in Theorem \ref{thm1}. It is also natural, in our view, to formulate a stronger conjecture, namely that two subrows whose integrals have exactly the same history of parity, must coincide. More precisely, we state the following
\begin{conj}
If $u$ and $w$ are two subrows and $P_n(u)=P_n(w)$ for every $n\ge 0$, then $u=w$.   
\label{conj2}
\end{conj}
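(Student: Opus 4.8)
The plan is to prove the contrapositive by strong induction on $|u|+|w|$, descending through $S$-derivatives and climbing back up via the injectivity of $S$-integration. Write $\pi_n(w):=|w_S^{-n}|\bmod 2$; the hypothesis $P_n(u)=P_n(w)$ for every $n$ is exactly $\pi_n(u)=\pi_n(w)$ for every $n\ge 0$. Two identities organize the reduction. Since $(w_S^{-1})_S^{-n}=w_S^{-(n+1)}$, passing to the $S$-integral shifts the history: $\pi_n(w_S^{-1})=\pi_{n+1}(w)$. Dually, when $w$ admits an $S$-derivative $w_S'$ one has $(w_S')_S^{-n}=w_S^{-(n-1)}$ for $n\ge 1$, so the history of $w_S'$ is that of $w$ with a single new bit $\pi_0(w_S')=|w_S'|\bmod 2$ prepended. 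Finally, because $|v_S^{-1}|=\Sigma v\equiv |v|_1\pmod 2$, one reads off $\pi_{n+1}(w)\equiv |w_S^{-n}|_1\pmod 2$: the $(n{+}1)$-st bit is the parity of the number of $1$s in the $n$-th $S$-integral.

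For the inductive step, assume the statement for all shorter pairs. If both $u$ and $w$ admit $S$-derivatives (Lemma \ref{findderivativeS}; at the finitely many boundary positions where they do not, I would trim the offending end digits and use the ordinary derivative, which sits inside $w_S'$ by Lemma \ref{find_derivative}), the second identity makes the histories of $u_S'$ and $w_S'$ agree in every entry except possibly the zeroth. Suppose for the moment that $\pi_0(u_S')=\pi_0(w_S')$. Since $S$-differentiation strictly shortens subrows of length $\ge 3$ (Lemma \ref{lengthestimates}), the inductive hypothesis gives $u_S'=w_S'=:v$ as subwords. Then $u$ and $w$ are the $S$-integrals of the single word $v$ taken at their two occurrences, so by Remark \ref{remsint} each equals $v^{-1}$ or $\widetilde{v^{-1}}$; and since the four words built from $v^{-1}$ and $\widetilde{v^{-1}}$ are distinct for $v\ne\epsilon$ (the fact used in the proof of Lemma \ref{noname}), we obtain $u=w$ precisely when the two occurrences make the same mirror choice.

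The difficulty is concentrated exactly in that last clause — deciding $v^{-1}$ from $\widetilde{v^{-1}}$ using only the parities $\pi_n$ — and it is already present in the base case. For the mirror pair $\{11,22\}$ one is lucky: $(11)_S^{-1}\in\{12,21\}$ (both of sum $3$) and $(22)_S^{-1}\in\{1122,2211\}$ (both of sum $6$), whence $\pi_2(11)=1\ne 0=\pi_2(22)$ independently of position. But for $\{12,21\}$ the two mirror images have opposite length parities — $(12)_S^{-1}\in\{122,211\}$ with sums $5,4$, and $(21)_S^{-1}\in\{112,221\}$ with sums $4,5$ — so an occurrence of $12$ carrying no mirror at the first step and an occurrence of $21$ carrying one there both give $\pi_2=1$, and any separation must be sought at deeper levels, where the computation again branches on position. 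Since $12$ and $21$ are mirror words with empty derivative, there is no left-hand (derivative) information to fall back on, so this smallest instance is already a faithful model of the whole obstruction.

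The root cause is that the parity dynamics admit no finite-state closure: although $\pi_{n+1}(w)$ records the parity of the total number of $1$s at level $n$, the next bit depends on how those $1$s are split between the odd- and even-indexed positions of $w_S^{-n}$, and this positional statistic is not a function of the counts, because positions inside an integral are displaced by the cumulative lengths of the preceding images. One can neutralize the spurious extra-bit issue by strengthening the induction hypothesis to the two-sided parity sequence that also records $|w_S^{(j)}|\bmod 2$ for the iterated $S$-derivatives; this determines $\pi_0(w_S')$ for free and forces the equality $\pi_0(u_S')=\pi_0(w_S')$ assumed above. What it does not resolve is the mirror obstruction itself: one would still have to show that in $S$ no occurrence of a word and no occurrence of its complement induce identical histories. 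Because the mirror choices are fixed by where the words actually sit in $S$, I expect this final step to require genuine structural input on the distribution of occurrences and their position parities in $S$ — the very kind of information on which the recurrence and density questions turn — which is, in my view, why the statement is offered only as a conjecture.
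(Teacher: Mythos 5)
This statement is Conjecture \ref{conj2} of the paper: it is posed as an open problem, and the paper offers no proof of it (the only surrounding discussion notes that it would imply Conjecture \ref{conj1}). So there is no proof in the paper to compare yours against, and your proposal --- as you yourself admit in the final paragraph --- is not a proof either. The verdict is therefore a genuine gap, but an honestly flagged one, and your diagnosis of where the difficulty sits is sound.

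Concretely, the gap is the mirror obstruction you isolate: after reducing via $S$-derivatives to $u_S'=w_S'=v$, you must still decide, from parity data alone, whether the two occurrences integrate to the same element of $\lbrace v^{-1},\widetilde{v^{-1}}\rbrace$, and your computation on the pair $\lbrace 12,21\rbrace$ correctly shows that the low-order parities cannot separate a word from its mirror when the two occurrences have opposite position parities (both yield $\pi_2=1$ in your example). Your local reductions are correct --- the shift identity $\pi_n(w_S^{-1})=\pi_{n+1}(w)$, the congruence $|v_S^{-1}|=\Sigma v\equiv |v|_1 \pmod 2$, and the level-2 separation of $\lbrace 11,22\rbrace$ --- but two further problems deserve mention. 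First, your repair of the ``extra bit'' issue (strengthening the induction hypothesis to the two-sided sequence recording $|w_S^{(j)}|\bmod 2$ as well) changes the statement: with stronger hypotheses on the pair $(u,w)$ you would be proving a strictly weaker claim than the conjecture, unless you also show that those derivative parities are determined by the integral parities, which you do not. Second, and more fundamentally, resolving the mirror obstruction requires exactly the kind of structural information about occurrences and their position parities in $S$ that the open problems (recurrence, mirror/reversal invariance) encode; this is consistent with the paper leaving the statement as a conjecture, and with Lemma \ref{noname}, which goes in the opposite direction (different histories force distinct integrals) precisely because that direction avoids the obstruction. In short: your analysis is a reasonable account of why no proof is currently available, but it does not establish the statement.
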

If this is true, then Conjecture \ref{conj1} follows, as if $w$ is an $\infty$-regular subrow, we can split it in two subrows with the same history of parity, which contradicts Conjecture \ref{conj2}.

\section{Acknowledgments}
 I am deeply grateful to Lucio Russo (who introduced me to the problem), Stefano Isola and Riccardo Piergallini for many fruitful discussions.

\bibliographystyle{plain}
\bibliography{biblio}
\end{document}